\theoremstyle{definition}
\newtheorem{definition}{Definition}[section]
\theoremstyle{plain}
\newtheorem{lemma}[definition]{Lemma}
\newtheorem{proposition}[definition]{Proposition}
\newtheorem{theorem}[definition]{Theorem}
\newtheorem{corollary}[definition]{Corollary}
\theoremstyle{remark}
\newtheorem{remark}[definition]{Remark}
\numberwithin{equation}{section}
\title{On the Mirabolic Trace Formula for $\mathfrak{gl}(n)$}
\author{Shuyang Cheng}
\date{}
\begin{document}
\maketitle

\begin{abstract}
    In this paper, a Chaudouard type trace formula is established for the Lie algebra $\mathfrak{gl}(n)$, by integrating the Lie algebra analogue of the Selberg kernel function against a mirabolic Eisenstein series on $\mathrm{GL}(n)$. The result is a combination of zeta functions $\zeta_E(s)$ of extensions over the base field $F$ of degree $[E:F]\leq n$.
\end{abstract}

\section*{A note to the reader}

The author would like to express his gratitude to an anonymous referee who pointed out that in Definition \ref{regularized mirabolic integral definition} where the regularized mirabolic integral $I(s)$ is defined, the summation $\sum_P$ over all parabolic subgroups $P\subset G$ is divergent. There are two possible approaches to address this issue:
\begin{itemize}
    \item The author believes it should be possible to treat the integrand of $I(s)$, in other words the divergent series 
    \begin{eqnarray*}
        \sum_P(-1)^{\ell(P)} K_P(g;f) \sum_{v^*\in(V^{U_P})'(F)} \Phi(v^*g)\big|\det(g)\big|^s,
    \end{eqnarray*}
    as a sum of distributions and then establish, via essentially the same proof for Proposition \ref{gl(n) singular integral proposition}, that the sum of distributions converges to a distribution which could be represented by an integrable function whose integral is equal to $\sum_{\lambda\vdash n} \sum_{\mathfrak{o}_\lambda} I_{\mathfrak{o}_\lambda}(s)$.
    \item Alternatively one could define $I(s)$ directly to be $\sum_{\lambda\vdash n} \sum_{\mathfrak{o}_\lambda} I_{\mathfrak{o}_\lambda}(s)$. Then the main burden of the proof becomes establishing the Fourier invariance $f\leftrightarrow\widehat{f}$ in Theorem \ref{mirabolic trace formula}, for which one could rearrange the ingredients of the proof for Proposition \ref{gl(n) singular integral proposition} and proceed by induction on $n$. This approach has the advantage of staying within the realm of functions in the classical sense.
\end{itemize}
The author is working on the second approach. This draft will be updated when the issue has been fixed. Meanwhile, the author welcomes any ideas and suggestions on possible improvements via email.

\tableofcontents

\section{Introduction}
\label{intro}

Let $G$ be a Lie group and $\Gamma\subset G$ a discrete subgroup such that the quotient space $\Gamma\backslash G$ has finite volume. The Selberg trace formula is an identity of the form
\begin{eqnarray}
    \label{selberg trace formula}
    \sum_oI_o
    &=&
    \sum_\pi I_\pi
\end{eqnarray}
where $o$ ranges over all conjugacy classes in $\Gamma$, $\pi$ ranges over all irreducible unitary representations of $G$ in $L^2(\Gamma\backslash G)$, and $I_o,I_\pi$ are distributions on $G$.

The proof of Selberg \cite{S56} starts with the Selberg kernel function
\begin{eqnarray}
    \label{selberg kernel function}
    K(g_1,g_2;f)
    &=&
    \sum_{\gamma\in\Gamma} f(g_1^{-1}\gamma g_2)
\end{eqnarray}
where $g_1,g_2\in\Gamma\backslash G$ and $f\in C_c^\infty(G)$ is a test function, and then derives the Selberg trace formula (\ref{selberg trace formula}) by expanding the integral
\begin{eqnarray}
    \label{selberg integral}
    \int_{\Gamma\backslash G} K(g,g;f)\mathrm{d}g
    &=&
    \mathrm{Tr}\big(R_f,L^2(\Gamma\backslash G)\big)
\end{eqnarray}
in two different ways. In many interesting examples the quotient space $\Gamma\backslash G$ is non-compact. In this case the integral (\ref{selberg integral}) will diverge and one integrates a truncated kernel function $K^T(g,g;f)$ over $\Gamma\backslash G$ instead.

Alternatively, Zagier \cite{Z81} has introduced another method to regularize the divergent integral (\ref{selberg integral}), in the first interesting example of $G=\mathrm{SL}_2(\mathbb{R})$ and $\Gamma=\mathrm{SL}_2(\mathbb{Z})$, namely by integrating
\begin{eqnarray}
    \label{zagier integral}
    \int_{\Gamma\backslash G} K(g,g;f)E(g,s) \mathrm{d}g
\end{eqnarray}
where $s\in\mathbb{C}$ and $E(g,s)$ is an Eisenstein series on $\Gamma\backslash G$ such that the integral (\ref{zagier integral}) converges on the half-plane $\mathrm{Re}(s)>1$ and continuous to a meromorphic function defined on the entire complex plane. Formally, the integral (\ref{zagier integral}) will converge to (\ref{selberg integral}) as $s\to1$, and the divergent nature of (\ref{selberg integral}) is reflected by the existence of a pole of (\ref{zagier integral}) at $s=1$. Then the divergent integral (\ref{selberg integral}) could be regularized by evaluating the residue of (\ref{zagier integral}) at $s=1$.

The method of Zagier \cite{Z81} is reminiscent of the method of zeta function regularization. Indeed, one of the most beautiful features of Zagier's method is that, after expanding (\ref{zagier integral}) into the identity
\begin{eqnarray}
    \label{zagier trace formula}
    \sum_oI_o(s)
    &=&
    \sum_\pi I_\pi(s)
\end{eqnarray}
which is analogous to (\ref{selberg trace formula}), the geometric distributions $I_o(s)$ are Dedekind zeta functions of quadratic fields, and the spectral distributions $I_\pi(s)$ are Rankin--Selberg $L$-functions of modular forms, or other special functions of a similar nature. The identity (\ref{zagier trace formula}) is the first example of a mirabolic trace formula.

In many introductory expositions, the Selberg trace formula is presented as a non-abelian generalization of the Poisson summation formula
\begin{eqnarray}
    \label{poisson summation formula}
    \sum_{n\in\mathbb{Z}}f(n)
    &=&
    \sum_{n\in\mathbb{Z}}\widehat{f}(n)
\end{eqnarray}
for the pair $G=\mathbb{R}$ and $\Gamma=\mathbb{Z}$. However, there is another less well-known but equally interesting relation between (\ref{poisson summation formula}) and (\ref{selberg trace formula}), namely the trace formula for Lie algebras. Let $\mathfrak{g}$ be the Lie algebra of $G$ and $\Lambda\subset\mathfrak{g}$ a lattice which is preserved under the adjoint action of the discrete subgroup $\Gamma\subset G$. Then analogous to (\ref{selberg kernel function}), one may define the Lie algebra kernel function
\begin{eqnarray}
    K(g;f)
    &=&
    \sum_{\nu\in\Lambda} f\big(\nu\cdot\mathrm{ad}(g)\big)
\end{eqnarray}
where $g\in\Gamma\backslash G$ and $f\in\mathcal{S}(\mathfrak{g})$ is a Schwartz function, and formally arrive at a possibly divergent identity of the form
\begin{eqnarray}
    \label{chaudouard trace formula}
    \sum_\mathfrak{o}I_\mathfrak{o}
    &=&
    \sum_\mathfrak{o}\widehat{I_\mathfrak{o}}
\end{eqnarray}
where $\mathfrak{o}$ ranges over all orbits in $\Lambda$ under the adjoint action of $\Gamma$, and $I_\mathfrak{o},\widehat{I_\mathfrak{o}}$ are tempered distributions on $\mathfrak{g}$ such that
\begin{eqnarray}
    \langle\widehat{I_\mathfrak{o}},f\rangle
    &=&
    \langle I_\mathfrak{o},\widehat{f}\rangle.
\end{eqnarray}
A regularized version of the trace formula for Lie algebras (\ref{chaudouard trace formula}) has been established by Chaudouard \cite{C02} following the truncation method of Selberg which has been generalized by Arthur \cite{A78}, working over the adeles.

The goal of this paper is to regularize the identity (\ref{chaudouard trace formula}), following Zagier's method which has been reformulated by Jacquet--Zagier \cite{JZ87} over $\mathrm{GL}_2(\mathbb{A})$, for the adelic pairs $G=\mathrm{GL}_n(\mathbb{A}),\Gamma=\mathrm{GL}_n(F)$ and $\mathfrak{g}=\mathfrak{gl}_n(\mathbb{A}),\Lambda=\mathfrak{gl}_n(F)$. The result is a mirabolic trace formula for $\mathfrak{gl}(n)$
\begin{equation}
    \sum_\mathfrak{o}I_\mathfrak{o}(s)
    =
    \sum_\mathfrak{o}\widehat{I_\mathfrak{o}}(s).
    \tag{Theorem \ref{mirabolic trace formula}}
\end{equation}
Following the main references \cite{C02} and \cite{JZ87}, the rest of this paper will be written in the language of adelic algebraic groups.

\section{Preliminaries and a motivating example}
\label{sec:2}

\paragraph{Algebraic preliminaries} Let $G$ denote the algebraic group $\mathrm{GL}(n)$ and $\mathfrak{g}$ denote the Lie algebra $\mathfrak{gl}(n)$. More generally, the Lie algebra of an algebraic group will be denoted by the same letter in Fraktur font. Denote the right adjoint action by
\begin{eqnarray*}
    X\cdot\mathrm{ad}(g)
    &=&
    g^{-1}Xg
\end{eqnarray*}
where $X\in\mathfrak{g}$ and $g\in G$. Let $V$ denote the vector space consisting of $n$-dimensional row vectors, then $G$ operates on $V$ from the right by right multiplication.

Let $Z\subset G$ denote the center consisting of scalar matrices, $T\subset G$ denote the maximal torus consisting of diagonal matrices, $B^\pm\subset G$ denote the Borel subgroups consisting of upper or lower triangular matrices, and $U^\pm\subset B^\pm$ denote the unipotent radicals consisting of matrices with ones along the diagonal.

A standard parabolic subgroup is a subgroup which contains $B^+$. More generally, a parabolic subgroup is a subgroup which is conjugate to some standard parabolic subgroup. If $P\subset G$ is parabolic, let $U_P\subset P$ denote the unipotent radical and $M_P=P/U_P$ denote the Levi component. If $P\subset G$ is standard parabolic, then $P$ consists of block upper triangular matrices, and $M_P\subset P$ will be identified with the subgroup consisting of block diagonal matrices. Each parabolic subgroup $P\subset G$ is associated with a partial flag
\begin{eqnarray*}
    V=V_r\supset V_{r-1}\supset\dots\supset V_1\supset V_0=0
\end{eqnarray*}
with successive subquotients $W_i=V_i/V_{i-1}$ such that $M_P\simeq\prod_{i=1}^r\mathrm{GL}(W_i)$. Let $\ell(P)=r+1$ denote the length of the partial flag associated to $P$.

A partition of $n$ consists of positive integers $n_1\geq\dots\geq n_r$ such that $n_1+\dots+n_r=n$. Due to inconsistent conventions between the theory of partitions and linear algebra, partitions will be written in reverse order and denoted by $[n_r,\dots,n_1]=\lambda\vdash n$. If $\lambda\vdash n$ is a partition, let $P_\lambda\subset G$ denote the standard parabolic subgroup consisting of block upper triangular matrices such that the $(i,j)$th block consists of $n_i\times n_j$ matrices.

An element $X\in\mathfrak{g}$ is regular if its centralizer $G_X\subset G$ is $n$-dimensional, or equivalently if there exists $v^*\in V$ such that $v^*,v^*X,\dots,v^*X^{n-1}$ form a basis of $V$. Each regular element is conjugate to a unique companion matrix of the form
\begin{eqnarray*}
    X_p&=&
    \left[\begin{matrix}
    a_1&\cdots&a_{n-1}&a_n\\
    1&\cdots&0&0\\
    \vdots&\ddots&\vdots&\vdots\\
    0&\cdots&1&0
    \end{matrix}\right]
\end{eqnarray*}
where $p(t)=t^n-a_1t^{n-1}-\dots-a_n$ is the characteristic polynomial of $X$. More generally by the theory of Frobenius normal form, each $X\in\mathfrak{g}$ is conjugate to a unique block diagonal matrix of the form
\begin{eqnarray*}
    X_{p_r,\dots,p_1}
    &=&
    \left[\begin{matrix}
    X_{p_r}&0&\cdots&0\\
    0&X_{p_{r-1}}&\cdots&0\\
    \vdots&\vdots&\ddots&\vdots\\
    0&0&\cdots&X_{p_1}
    \end{matrix}\right]
\end{eqnarray*}
where $p_r\mid p_{r-1}\mid\dots\mid p_1$. Let $n_i=\deg(p_i)$, then $X$ has Frobenius normal form of partition type $\lambda=[n_r,\dots,n_1]\vdash n$, and the polynomials $p_r,\dots,p_1$ are the invariant factors of $X$.

\paragraph{Analytic preliminaries}
Let $F$ be a global field and let $\mathbb{A}=\prod_v'F_v$ denote the ring of adeles of $F$, where the product ranges over all places $v$ of $F$ which is restricted with respect to the local rings of integers $O_v\subset F_v$ for all $v<\infty$. Let $|\cdot|_v:F_v\to\mathbb{R}_{\geq0}$ and $|\cdot|:\mathbb{A}\to\mathbb{R}_{\geq0}$ denote the local and global norms such that $|\cdot|=\prod_v|\cdot|_v$ and the product formula
\begin{eqnarray*}
    \prod_v|x|_v
    &=&
    1
\end{eqnarray*}
holds for all $x\in F^\times$. Fix a non-trivial additive character $\psi:\mathbb{A}\to\mathbb{C}^\times$ such that
\begin{eqnarray*}
    \psi(x)
    &=&
    1
\end{eqnarray*}
holds for all $x\in F$.

Let $K=\prod_{v<\infty}G(O_v)\times K_\infty\subset G(\mathbb{A})$ be a maximal compact subgroup such that the Iwasawa decomposition
\begin{eqnarray*}
    G(\mathbb{A})
    &=&
    P(\mathbb{A})K
\end{eqnarray*}
holds for all parabolic subgroups $P\subset G$. If $P\subset G$ is standard parabolic, fix compatible Haar measures such that
\begin{eqnarray*}
    \int_{G(\mathbb{A})}h(g)\mathrm{d}g
    &=&
    \int_K\int_{U_P(\mathbb{A})}\int_{M_P(\mathbb{A})} h(muk)\mathrm{d}m\mathrm{d}u\mathrm{d}k\\
    &=&
    \int_K\int_{M_P(\mathbb{A})}\int_{U_P(\mathbb{A})} h(umk) \prod_{i<j}\frac{|\det(m_i)|^{n_j}}{|\det(m_j)|^{n_i}}\mathrm{d}u\mathrm{d}m\mathrm{d}k
\end{eqnarray*}
for all $h\in L^1(G(\mathbb{A}))$, where $m=\left[\begin{smallmatrix}
m_r&\dots&0\\
\dots&\dots&\dots\\
0&\dots&m_1
\end{smallmatrix}\right]\in M_P(\mathbb{A})$ is block diagonal with $m_i\in\mathrm{GL}_{n_i}(\mathbb{A})$.

If $W$ is a finite-dimensional vector space, let $\mathcal{S}(W(\mathbb{A}))=\bigotimes_v'\mathcal{S}(W(F_v))$ denote the vector space of Schwartz functions, where the tensor product is restricted with respect to the characteristic functions $\mathbbm{1}_{W(O_v)}$ for all $v<\infty$. Let $\langle\cdot,\cdot\rangle$ be a non-degenerate bilinear form on $W$, then $W(\mathbb{A})$ admits a unique Haar measure $\mathrm{d}w$ which is self dual with respect to $\langle\cdot,\cdot\rangle$ and $\psi$ such that the Fourier transform $\widehat{f}\in\mathcal{S}(W(\mathbb{A}))$ defined by
\begin{eqnarray*}
    \widehat{f}(v)
    &=&
    \int_{W(\mathbb{A})} f(w)\overline{\psi}(\langle v,w\rangle)\mathrm{d}w
\end{eqnarray*}
satisfies the Fourier inversion formula
\begin{eqnarray*}
    \widehat{\widehat{f}(}w)
    &=&
    f(-w)
\end{eqnarray*}
and the Poisson summation formula
\begin{eqnarray*}
    \sum_{w\in W(F)} f(w)
    &=&
    \sum_{w\in W(F)} \widehat{f}(w)
\end{eqnarray*}
for all $f\in\mathcal{S}(W(\mathbb{A}))$.

Equip the vector space $\mathfrak{g}$ with the non-degenerate bilinear form
\begin{eqnarray*}
    \langle X,Y\rangle
    &=&
    \mathrm{tr}(XY)
\end{eqnarray*}
which is invariant under the adjoint action and the associated self-dual Haar measure $\mathrm{d}X$ on $\mathfrak{g}(\mathbb{A})$. Then the induced Fourier transform operator on $\mathcal{S}(\mathfrak{g}(\mathbb{A}))$ intertwines with the representation of $G(\mathbb{A})$ on $\mathcal{S}(\mathfrak{g}(\mathbb{A}))$ via the adjoint action
\begin{eqnarray*}
    \begin{tikzcd}
    \mathcal{S}(\mathfrak{g}(\mathbb{A})) \arrow[r,"\widehat{f}"] \arrow[d,"f(X\cdot\mathrm{ad}(g))"'] & \mathcal{S}(\mathfrak{g}(\mathbb{A})) \arrow[d,"\widehat{f}(X\cdot\mathrm{ad}(g))"]\\
    \mathcal{S}(\mathfrak{g}(\mathbb{A})) \arrow[r,"\widehat{f}"] & \mathcal{S}(\mathfrak{g}(\mathbb{A})).
    \end{tikzcd}
\end{eqnarray*}
If $P\subset G$ is a parabolic subgroup and $f\in\mathcal{S}(\mathfrak{g}(\mathbb{A))}$, define the parabolic descent $f_P\in\mathcal{S}(\mathfrak{m}_P(\mathbb{A}))$ by
\begin{eqnarray*}
    f_P(X)
    &=&
    \int_K\int_{\mathfrak{u}_P(\mathbb{A})} f\big((\tilde{X}+U)\cdot\mathrm{ad}(k)\big) \mathrm{d}U\mathrm{d}k
\end{eqnarray*}
where $\tilde{X}\in\mathfrak{p}(\mathbb{A})$ is a lift of $X\in\mathfrak{m}_P(\mathbb{A})$. Then the parabolic descent operator intertwines with the Fourier transform operators on $\mathcal{S}(\mathfrak{g}(\mathbb{A}))$ and $\mathcal{S}(\mathfrak{m}_P(\mathbb{A}))$
\begin{eqnarray*}
    \begin{tikzcd}
    \mathcal{S}(\mathfrak{g}(\mathbb{A})) \arrow[r,"\widehat{f}"] \arrow[d,"f_P"'] & \mathcal{S}(\mathfrak{g}(\mathbb{A})) \arrow[d,"\widehat{f}_P"]\\
    \mathcal{S}(\mathfrak{m}_P(\mathbb{A})) \arrow[r,"\widehat{f}"] & \mathcal{S}(\mathfrak{m}_P(\mathbb{A})).
    \end{tikzcd}
\end{eqnarray*}

Equip the vector space $V$ with the non-degenerate bilinear form
\begin{eqnarray*}
    \langle u^*,v^*\rangle
    &=&
    u^*(^\intercal v^*)
\end{eqnarray*}
where $\intercal$ denotes matrix transpose and the associated self-dual Haar measure $\mathrm{d}v^*$ on $V(\mathbb{A})$. Then the induced Fourier transform operator on $\mathcal{S}(V(\mathbb{A}))$ intertwines with the standard representation of $G(\mathbb{A})$ on $\mathcal{S}(V(\mathbb{A}))$ via right multiplication and its contragredient twisted by $|\det|^{-1}$
\begin{eqnarray*}
    \begin{tikzcd}
    \mathcal{S}(V(\mathbb{A})) \arrow[r,"\widehat{f}"] \arrow[d,"f(v^*g)"'] & \mathcal{S}(V(\mathbb{A})) \arrow[d,"|\det(g)|^{-1}\widehat{f}(v^*{}^\intercal g^{-1})"]\\
    \mathcal{S}(V(\mathbb{A})) \arrow[r,"\widehat{f}"] & \mathcal{S}(V(\mathbb{A})).
    \end{tikzcd}
\end{eqnarray*}

\paragraph{The trace formula of Chaudouard}
Let $f\in\mathcal{S}(\mathfrak{g}(\mathbb{A}))$, define the kernel function
\begin{eqnarray*}
    K(g;f)
    &=&
    \sum_{X\in\mathfrak{g}(F)} f\big(X\cdot\mathrm{ad}(g)\big)
\end{eqnarray*}
and the parabolic kernel functions
\begin{eqnarray*}
    K_P(g;f)
    &=&
    \sum_{X\in\mathfrak{p}(F)} \int_{\mathfrak{u}_P(\mathbb{A})/\mathfrak{u}_P(F)} f\big((X+U)\cdot\mathrm{ad}(g)\big) \mathrm{d}U
\end{eqnarray*}
for $g\in G(F)\backslash G(\mathbb{A})$, then
\begin{eqnarray*}
    K(g;f)=K(g;\widehat{f})
    &\mathrm{and}&
    K_P(g;f)=K_P(g;\widehat{f})
\end{eqnarray*}
by the Poisson summation formula.

Following the truncation procedure of Arthur \cite{A78}, Chaudouard \cite{C02} has regularized the divergent integral
\begin{eqnarray*}
    \int_{G(F)\backslash G(\mathbb{A})^1} \sum_P(-1)^{\ell(P)} K_P(g;f) \mathrm{d}g,
\end{eqnarray*}
where $G(\mathbb{A})^1\subset G(\mathbb{A})$ denotes the subgroup consisting of $g\in G(\mathbb{A})$ such that $|\det(g)|=1$, into an absolutely convergent identity
\begin{eqnarray*}
    \sum_{\tilde{\mathfrak{o}}} J_{\tilde{\mathfrak{o}}}(f)
    &=&
    \sum_{\tilde{\mathfrak{o}}} J_{\tilde{\mathfrak{o}}}(\widehat{f})
\end{eqnarray*}
where the summation $\sum_{\tilde{\mathfrak{o}}}$ ranges over all equivalence classes $\tilde{\mathfrak{o}}\subset\mathfrak{g}(F)$ where each $\tilde{\mathfrak{o}}$ consists of all $X\in\mathfrak{g}(F)$ with a common characteristic polynomial $p_{\tilde{\mathfrak{o}}}(t)\in F[t]$.
\begin{itemize}
    \item If the characteristic polynomial $p_{\tilde{\mathfrak{o}}}(t)$ is irreducible over $F$, then $\tilde{\mathfrak{o}}=\mathfrak{o}_\mathrm{ell}$ is an elliptic conjugacy class in $\mathfrak{g}(F)$ with centralizer isomorphic to the Weil restriction $\mathrm{Res}_{E/F}\mathbb{G}_m$ where $E\simeq F[t]/(p_{\mathfrak{o}_\mathrm{ell}})$, which is an elliptic torus modulo $Z$ with $\mathrm{Res}_{E/F}\mathbb{G}_m(F)\simeq E^\times$.
    
    In this case the truncation procedure trivializes and
    \begin{eqnarray*}
        J_{\mathfrak{o}_\mathrm{ell}}(f)
        &=&
        \int_{G(F)\backslash G(\mathbb{A})^1} \sum_{X\in\mathfrak{o}_\mathrm{ell}} f\big(X\cdot\mathrm{ad}(g)\big) \mathrm{d}g\\
        &=&
        \mathrm{vol}(\mathbb{A}_E^1/E^\times) \int_{G_X(\mathbb{A})\backslash G(\mathbb{A})} f\big(X\cdot\mathrm{ad}(g)\big) \mathrm{d}g
    \end{eqnarray*}
    where $\mathbb{A}_E^1\subset\mathbb{A}_E^\times$ denotes the group of ideles of $E$ of norm 1.
    \item More generally if the characteristic polynomial $p_{\tilde{\mathfrak{o}}}(t)$ has distinct roots in its splitting field, or equivalently if its discriminant $\Delta(p_{\tilde{\mathfrak{o}}})\neq0$, then $\tilde{\mathfrak{o}}=\mathfrak{o}_\mathrm{rs}$ is a regular semisimple conjugacy class in $\mathfrak{g}(F)$ with centralizer isomorphic to a possibly non-elliptic torus.
    
    In this case Chaudouard \cite{C02} has obtained a similar expression
    \begin{eqnarray*}
        J_{\mathfrak{o}_\mathrm{rs}}(f)
        &=&
        \mathrm{vol}(Z_{M_1}^\infty G_{X_1}(F)\backslash G_{X_1}(\mathbb{A})) \int_{G_{X_1}(\mathbb{A})\backslash G(\mathbb{A})} f\big(X_1\cdot\mathrm{ad}(g)\big) v(g,T_0)\mathrm{d}g
    \end{eqnarray*}
    as an orbital integral weighted by the weight factor $v(g,T_0)$ introduced by Arthur \cite{A78}.
    \item Otherwise $\Delta(p_{\tilde{\mathfrak{o}}})=0$ and $\tilde{\mathfrak{o}}=\bigcup_{[\lambda_i]}\mathfrak{o}_{[\lambda_i]}$ is a finite union of conjugacy classes indexed by partitions $\lambda_i$ which correspond to different Jordan block sizes for the $i$th distinct eigenvalue.
    
    In this case the distributions $J_{\tilde{\mathfrak{o}}}(f)$ are more difficult to understand. However, for most applications one could impose finitely many local conditions on the test functions $f=\otimes_v f_v$ and $\widehat{f}=\otimes_v\widehat{f_v}$ to remove all non-regular semisimple or non-elliptic summands and work with the simple trace formula
    \begin{eqnarray*}
        \sum_{\mathfrak{o}_\mathrm{rs}} J_{\mathfrak{o}_\mathrm{rs}}(f)
        &=&
        \sum_{\mathfrak{o}_\mathrm{rs}} J_{\mathfrak{o}_\mathrm{rs}}(\widehat{f})
    \end{eqnarray*}
    or the very simple trace formula
    \begin{eqnarray*}
        \sum_{\mathfrak{o}_\mathrm{ell}} J_{\mathfrak{o}_\mathrm{ell}}(f)
        &=&
        \sum_{\mathfrak{o}_\mathrm{ell}} J_{\mathfrak{o}_\mathrm{ell}}(\widehat{f}).
    \end{eqnarray*}
\end{itemize}

\paragraph{Tate integrals for zeta functions}
Let $\Phi=\otimes_v \Phi_v\in\mathcal{S}(\mathbb{A})$, then the Tate integral $I(s;\Phi)$ is an adelic integral of the form
\begin{eqnarray*}
    I(s;\Phi)
    &=&
    \int_{\mathbb{A}^\times} \Phi(x) |x|^s\mathrm{d}^\times x\\
    &=&
    \prod_v\int_{F_v^\times} \Phi_v(x_v) |x_v|_v^s\mathrm{d}^\times x_v
\end{eqnarray*}
for all $s\in\mathbb{C}$ such that the integral converges absolutely. In his thesis \cite{T50}, Tate has established the following analytic properties of $I(s;\Phi)$:
\begin{itemize}
    \item the integral $I(s;\Phi)$ converges absolutely on the half-plane $\mathrm{Re}(s)>1$ and continuous to a meromorphic function on the entire complex plane, which will also be denoted by $I(s;\Phi)$;
    \item there exists a function $\zeta_F(s)$ which is holomorphic on the half-plane $\mathrm{Re}(s)>1$ such that
    \begin{eqnarray*}
        I(s;\Phi)
        &=&
        \zeta_F(s)\prod_{v\in S}e_v(s;\Phi_v)
    \end{eqnarray*}
    for all $\Phi\in\mathcal{S}(\mathbb{A})$, where $S$ is a finite set of places of $F$ and each $e_v(s;\Phi_v)$ is an elementary function in $s$ which continuous holomorphically to the entire complex plane;
    \item the meromorphic function $I(s;\Phi)$ satisfies the functional equation
    \begin{eqnarray*}
        I(s;\Phi)
        &=&
        I(1-s;\widehat{\Phi})
    \end{eqnarray*}
    which is another consequence of the Poisson summation formula.
\end{itemize}
From the analytic properties of $I(s;\Phi)$, Tate \cite{T50} then deduces the classical Euler factorization and functional equation of the zeta function $\zeta_F(s)$.

The group $G$ operates on $V$ with a unique Zariski open orbit denoted by $V'=V-\{0\}$. Let $\Phi\in\mathcal{S}(V(\mathbb{A}))$, then the mirabolic Eisenstein series $E(g,s;\Phi)$ is an automorphic function in $g\in Z(\mathbb{A})G(F)\backslash G(\mathbb{A})$ defined by
\begin{eqnarray*}
    E(g,s;\Phi)
    &=&
    |\det(g)|^s \int_{\mathbb{A}^\times/F^\times} \sum_{v^*\in V'(F)} \Phi(zv^*g)|z|^{ns} \mathrm{d}^\times z
\end{eqnarray*}
which converges absolutely for $\mathrm{Re}(s)>1$ and continuous meromorphically to the entire complex plane. As another consequence of the Poisson summation formula, the mirabolic Eisenstein series $E(g,s;\Phi)$ satisfies the functional equation
\begin{eqnarray*}
    E(g,s;\Phi)
    &=&
    E(^\intercal g^{-1},1-s;\widehat{\Phi}).
\end{eqnarray*}
If $X\in\mathfrak{o}_\mathrm{ell}$ is an elliptic element in $\mathfrak{g}(F)$ with characteristic polynomial $p_{\mathfrak{o}_\mathrm{ell}}\in F[t]$, then the integral
\begin{eqnarray*}
    I_X(s;\Phi)
    &=&
    \int_{G_X(F)\backslash G_X(\mathbb{A})^1} E(g,s;\Phi) \mathrm{d}g
\end{eqnarray*}
is a Tate integral for $\zeta_E(s)$ where $E\simeq F[t]/(p_{\mathfrak{o}_\mathrm{ell}})$.

More generally, Tate \cite{T50} has introduced Tate integrals twisted by a multiplicative character $\chi:\mathbb{A}^1/F^\times\to\mathbb{C}^\times$ where $\mathbb{A}^1\subset\mathbb{A}^\times$ denotes the group of ideles of norm 1, and similarly there exist mirabolic Eisenstein series twisted by the character $\chi\circ\det:G(\mathbb{A})\to\mathbb{C}^\times$. The results of this paper could be extended to such cases, which would lead to a mirabolic version of the twisted trace formula for $\mathfrak{gl}(n)$. However, such an extension will be excluded from this paper for simplicity.

\paragraph{The example of $\mathfrak{gl}(2)$}
For the rest of this section let $\mathfrak{g}$ denote $\mathfrak{gl}(2)$, $G$ denote $\mathrm{GL}(2)$, and choose Schwartz functions $f\in\mathcal{S}(\mathfrak{g}(\mathbb{A}))$ and $\Phi\in\mathcal{S}(\mathbb{A}^2)$. The following have been computed for $\mathrm{Re}(s)>1$ in \cite{JZ87}:

\begin{itemize}
    \item If $\mathfrak{o}_\mathrm{ell}\subset\mathfrak{g}(F)$ is an elliptic conjugacy class containing $d+\sqrt{\delta}=\left[\begin{smallmatrix}d&\delta\\1&d\end{smallmatrix}\right]$ where $\delta$ is non-square in $F$, then
    \begin{eqnarray*}
        I_{\mathfrak{o}_\mathrm{ell}}(s)
        &=&
        \int_{Z(\mathbb{A})G(F)\backslash G(\mathbb{A})} \sum_{X\in\mathfrak{o}_\mathrm{ell}}f\big(X\cdot\mathrm{ad}(g)\big) E(g,s;\Phi)\mathrm{d}g\\
        &=&
        \int_{G(F)\backslash G(\mathbb{A})}\sum_{\begin{subarray}{c}X\in\mathfrak{o}_\mathrm{ell}\\v^*\in F^2{}'\end{subarray}} f\big(X\cdot\mathrm{ad}(g)\big) \Phi(v^*g) \big|\det(g)\big|^s \mathrm{d}g\\
        &=&
        \int_{\mathbb{A}[\sqrt{\delta}]^\times\backslash G(\mathbb{A})} f\big(\left[\begin{smallmatrix}d&\delta\\1&d\end{smallmatrix}\right]\cdot\mathrm{ad}(g)\big) \Big(\int_{\mathbb{A}[\sqrt{\delta}]^\times}\Phi(\alpha g)\big|\det(\alpha g)\big|^s\mathrm{d}^\times\alpha\Big)\mathrm{d}g
    \end{eqnarray*}
    is an elliptic orbital integral weighted by the Tate integral
    \begin{eqnarray*}
        \int_{\mathbb{A}[\sqrt{\delta}]^\times}\Phi(\alpha g)\big|\det(\alpha g)\big|^s\mathrm{d}^\times\alpha
        &=&
        \zeta_{F(\sqrt{\delta})}(s)e(g,s;\Phi)
    \end{eqnarray*}
    where $e(g,s;\Phi)$ is a finite Euler product of elementary functions in $s$.
    \item If $\mathfrak{o}_\mathrm{hyp}\subset\mathfrak{g}(F)$ is a hyperbolic conjugacy class containing $\left[\begin{smallmatrix}d_1&0\\0&d_2\end{smallmatrix}\right]$ where $d_1\neq d_2$, then
    \begin{eqnarray*}
        I_{\mathfrak{o}_\mathrm{hyp}}(s)
        &=&
        \int_{Z(\mathbb{A})G(F)\backslash G(\mathbb{A})} \sum_{X\in\mathfrak{o}_\mathrm{hyp}}\Big(f\big(X\cdot\mathrm{ad}(g)\big) E(g,s;\Phi)+\\
        &&\quad-\begin{array}{c}\textrm{singular~terms~arising~from}\\\textrm{all~Borel~}\mathfrak{b}\subset\mathfrak{g}\textrm{~containing~}X\end{array}\Big)\mathrm{d}g\\
        &=&
        \int_{G(F)\backslash G(\mathbb{A})}\sideset{}{'}\sum_{\begin{subarray}{c}X\in\mathfrak{o}_\mathrm{hyp}\\v^*\in F^2{}'\end{subarray}} f\big(X\cdot\mathrm{ad}(g)\big) \Phi(v^*g) \big|\det(g)\big|^s \mathrm{d}g
    \end{eqnarray*}
    where the primed summation ranges over all pairs $(X,v^*)$ such that $v^*$ is not an eigenvector of $X$, is equal to
    \begin{eqnarray*}
        &=&
        \int_{(\mathbb{A}^\times\times\mathbb{A}^\times)\backslash G(\mathbb{A})} f\big(\left[\begin{smallmatrix}d_1&0\\0&d_2\end{smallmatrix}\right]\cdot\mathrm{ad}(g)\big) \Big(\int_{\mathbb{A}^\times}\int_{\mathbb{A}^\times} \Phi\big([a,b]g\big)|a|^s|b|^s\mathrm{d}^\times a\mathrm{d}^\times b\Big)\mathrm{d}g
    \end{eqnarray*}
    which is a hyperbolic orbital integral weighted by the Tate integral
    \begin{eqnarray*}
        \int_{\mathbb{A}^\times}\int_{\mathbb{A}^\times} \Phi\big([a,b]g\big)|a|^s|b|^s\mathrm{d}^\times a\mathrm{d}^\times b
        &=&
        \zeta_F(s)^2e(g,s;\Phi)
    \end{eqnarray*}
    where $e(g,s;\Phi)$ is a finite Euler product of elementary functions in $s$.
    \item If $\mathfrak{o}_\mathrm{par}\subset\mathfrak{g}(F)$ is a parabolic conjugacy class containing $\left[\begin{smallmatrix}d&1\\0&d\end{smallmatrix}\right]$, then
    \begin{eqnarray*}
        I_{\mathfrak{o}_\mathrm{par}}(s)
        &=&
        \int_{Z(\mathbb{A})G(F)\backslash G(\mathbb{A})} \sum_{X\in\mathfrak{o}_\mathrm{par}}\Big(f\big(X\cdot\mathrm{ad}(g)\big) E(g,s;\Phi)+\\
        &&\quad-\begin{array}{c}\textrm{singular~terms~arising~from}\\\textrm{all~Borel~}\mathfrak{b}\subset\mathfrak{g}\textrm{~containing~}X\end{array}\Big)\mathrm{d}g\\
        &=&
        \int_{G(F)\backslash G(\mathbb{A})}\sideset{}{'}\sum_{\begin{subarray}{c}X\in\mathfrak{o}_\mathrm{par}\\v^*\in F^2{}'\end{subarray}} f\big(X\cdot\mathrm{ad}(g)\big) \Phi(v^*g) \big|\det(g)\big|^s \mathrm{d}g
    \end{eqnarray*}
    where the primed summation ranges over all pairs $(X,v^*)$ such that $v^*$ is not an eigenvector of $X$, is equal to
    \begin{eqnarray*}
        &=&
        \int_{(\mathbb{A}^\times\times\mathbb{A})\backslash G(\mathbb{A})} f\big(\left[\begin{smallmatrix}d&1\\0&d\end{smallmatrix}\right]\cdot\mathrm{ad}(g)\big)\times \Big(\int_{\mathbb{A}^\times}\int_\mathbb{A}\Phi\big([a,b]g\big)\frac{\mathrm{d}b}{|a|}\big|a^2\big|^s\mathrm{d}^\times a\Big)\mathrm{d}g
    \end{eqnarray*}
    which is a parabolic orbital integral weighted by the Tate integral
    \begin{eqnarray*}
        \int_{\mathbb{A}^\times}\Big(\int_\mathbb{A}\Phi\big([a,b]g\big)\mathrm{d}b\Big)|a|^{2s-1}\mathrm{d}^\times a
        &=&
        \zeta_F(2s-1)e(g,2s-1;\tilde{\Phi})
    \end{eqnarray*}
    where $\tilde{\Phi}\in\mathcal{S}(\mathbb{A})$ is a partial integral of $\Phi$ along parallel lines $\mathbb{A}\subset\mathbb{A}^2$. The orbital integral $I_{\mathfrak{o}_\mathrm{par}}(s)$ could be further evaluated as
    \begin{eqnarray*}
        &&
        \zeta_F(2s-1)\int_K\Big(\int_{\mathbb{A}^\times} f\big(\left[\begin{smallmatrix}d&c\\0&d\end{smallmatrix}\right]\cdot\mathrm{ad}(k)\big)|c|^s~\mathrm{d}^\times c\Big) e(g,2s-1;\tilde{\Phi})\mathrm{d}k\\
        &=&
        \zeta_F(s)\zeta_F(2s-1)\int_K e(d,k,s;f) e(g,2s-1;\tilde{\Phi})\mathrm{d}k
    \end{eqnarray*}
    where $e(d,k,s;f)$ and $e(g,2s-1;\tilde{\Phi})$ are both finite Euler products of elementary functions in $s$.
    \item If $\mathfrak{o}_z\subset\mathfrak{g}(F)$ is a central conjugacy class consisting of $\left[\begin{smallmatrix}d&0\\0&d\end{smallmatrix}\right]$, then
    \begin{eqnarray*}
        I_{\mathfrak{o}_z}(s)
        &=&
        \int_{G(F)\backslash G(\mathbb{A})} \sum_{v^*\in F^2{}'} \Phi(v^*g) \sum_{\begin{subarray}{c}X\in\mathfrak{b}_{v^*}(F)\\X\in\mathfrak{o}_z+\mathfrak{u}_{v^*}\end{subarray}} \Big(f\big(X\cdot\mathrm{ad}(g)\big)+\\
        &&\quad -\int_{\mathfrak{u}_{v^*}(\mathbb{A})/\mathfrak{u}_{v^*}(F)}f\big((X+U)\cdot\mathrm{ad}(g)\big)\mathrm{d}U\Big)\big|\det(g)\big|^s \mathrm{d}g
    \end{eqnarray*}
    where $\mathfrak{b}_{v^*}\subset\mathfrak{g}$ denotes the Borel subalgebra which stabilizes the line in $F^2$ spanned by $v^*$ and $\mathfrak{u}_{v^*}$ denotes the nilpotent radical of $\mathfrak{b}_{v^*}$, is equal to
    \begin{eqnarray*}
        &=&
        \int_K \Big(\int_{\mathbb{A}^\times}
        \widetilde{f^k}(d,c^*)\big|c^*\big|^s\mathrm{d}^\times c^*\Big) \Big(\int_{\mathbb{A}^\times} \Phi\big([0,a]k\big)\big|a\big|^{2s} \mathrm{d}^\times a\Big)\mathrm{d}k
        \\
        &=&
        \zeta_F(s)\zeta_F(2s)\int_K e(d,k,s;\widetilde{f^k})e(k,2s;\Phi)\mathrm{d}k
    \end{eqnarray*}
    where $f^k$ denotes the composite function $f\circ\mathrm{ad}(k)$ and $\widetilde{f}$ denotes the partial Fourier transform of $f$ defined by
    \begin{eqnarray*}
        \widetilde{f}(d,c^*)
        &=&
        \int_\mathbb{A} f\big(\left[\begin{smallmatrix}d&u\\0&d\end{smallmatrix}\right]\big) \overline{\psi}(c^*u) \mathrm{d}u.
    \end{eqnarray*}
\end{itemize}

\paragraph{The mirabolic trace formula for $\mathfrak{gl}(2)$}

Combining the previous results with the absolute convergence of the summation over all conjugacy classes which will be established for all $\mathfrak{gl}(n)$ later, it follows that the integral
\begin{eqnarray}
    \label{gl(2) mirabolic integral}
    &&
    \int_{G(F)\backslash G(\mathbb{A})} \sum_{v^*\in F^2{}'} \Big(K(g;f)-K_{B_{v^*}}(g;f)\Big)\Phi(v^*g)\big|\det(g)\big|^s \mathrm{d}g\nonumber\\
    &=&
    \sum_{\mathfrak{o}_\mathrm{ell}}I_{\mathfrak{o}_\mathrm{ell}}(s)+\sum_{\mathfrak{o}_\mathrm{hyp}}I_{\mathfrak{o}_\mathrm{hyp}}(s)+\sum_{\mathfrak{o}_\mathrm{par}}I_{\mathfrak{o}_\mathrm{par}}(s)+\sum_{\mathfrak{o}_z}I_{\mathfrak{o}_z}(s)
\end{eqnarray}
converges absolutely for $\mathrm{Re}(s)>1$ and continues meromorphically to the entire complex plane. Since $K(g;f)=K(g;\widehat{f})$ and $K_{B_{v^*}}(g;f)=K_{B_{v^*}}(g;\widehat{f})$ by the Poisson summation formula, it follows that the distribution defined by (\ref{gl(2) mirabolic integral}) is invariant under the Fourier transform $f\leftrightarrow\widehat{f}$ on $\mathcal{S}(\mathfrak{g}(\mathbb{A}))$.

\section{Contributions from the regular locus}
\label{sec:3}

\paragraph{The regular locus of the mirabolic action}
For the rest of this paper let $\mathfrak{g}$ denote $\mathfrak{gl}(n)$, $G$ denote $\mathrm{GL}(n)$, and choose Schwartz functions $f\in\mathcal{S}(\mathfrak{g}(\mathbb{A}))$ and $\Phi\in\mathcal{S}(V(\mathbb{A}))$. Motivated by the mirabolic integral
\begin{eqnarray*}
    &&
    \int_{Z(\mathbb{A})G(F)\backslash G(\mathbb{A})} K(g;f)E(g,s;\Phi)\mathrm{d}g\\
    &=&
    \int_{G(F)\backslash G(\mathbb{A})} \sum_{X\in\mathfrak{g}(F)} f\big(X\cdot\mathrm{ad}(g)\big) \sum_{v^*\in V'(F)}\Phi(v^*g)\big|\det(g)\big|^s \mathrm{d}g,
\end{eqnarray*}
consider the diagonal mirabolic action of $G$ on $\mathfrak{g}\times V'$. The affine quotient of $\mathfrak{g}\times V'$ by $G$ factors through the projection onto the first factor
\begin{eqnarray*}
    \begin{tikzcd}
    \mathfrak{g}\times V' \arrow{r} \arrow{d} & (\mathfrak{g}\times V')/\!\!/G \arrow[d,equal,"\rotatebox{90}{\(\sim\)}"]\\
    \mathfrak{g} \arrow{r} & \mathfrak{g}/\!\!/G
    \end{tikzcd}
\end{eqnarray*}
and defines a $G$-torsor over the open subvariety $(\mathfrak{g}\times V')_\mathrm{reg}\subset\mathfrak{g}\times V'$ consisting of all pairs $(X,v^*)$ such that $v^*,v^*X,\dots,v^*X^{n-1}$ form a basis of $V$.

\begin{proposition}
    \label{gl(n) regular integral proposition}
    The integral
    \begin{eqnarray}
        \label{gl(n) regular integral}
        &&
        \int_{G(F)\backslash G(\mathbb{A})} \sum_{(X,v^*)\in(\mathfrak{g}\times V')_\mathrm{reg}(F)} f\big(X\cdot\mathrm{ad}(g)\big) \Phi(v^*g)\big|\det(g)\big|^s \mathrm{d}g\\
        &=&
        \sum_{\mathfrak{o}_\mathrm{reg}} \Big(\int_{G(F)\backslash G(\mathbb{A})} \sum_{\begin{subarray}{c}(X,v^*)\in(\mathfrak{g}\times V')_\mathrm{reg}(F)\\X\in\mathfrak{o}_\mathrm{reg}\end{subarray}} f\big(X\cdot\mathrm{ad}(g)\big) \Phi(v^*g)\big|\det(g)\big|^s \mathrm{d}g\Big)\nonumber
    \end{eqnarray}
    converges absolutely if $\mathrm{Re}(s)>1$, and continues meromorphically over the entire complex plane, where the summation $\sum_{\mathfrak{o}_\mathrm{reg}}$ ranges over all regular conjugacy classes $\mathfrak{o}_\mathrm{reg}\subset\mathfrak{g}(F)$.
\end{proposition}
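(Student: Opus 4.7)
The plan is to exploit the freeness of the $G$-action on $(\mathfrak{g}\times V')_\mathrm{reg}$ to unfold the sum into a sum over regular $G(F)$-orbits, interpret each orbit integral as a weighted orbital integral tied to a Tate integral, and invoke Tate's theorem for convergence and meromorphic continuation.

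\textbf{Step 1 (unfolding).} If $g\in G$ fixes a regular pair $(X,v^*)$, then it fixes each of the basis vectors $v^*,v^*X,\dots,v^*X^{n-1}$ and hence $g=1$. Combined with the stated identification of the affine quotient $(\mathfrak{g}\times V')/\!\!/G$ with $\mathfrak{g}/\!\!/G$, this shows that $G(F)$-orbits on $(\mathfrak{g}\times V')_\mathrm{reg}(F)$ are in bijection with the $F$-rational regular conjugacy classes $\mathfrak{o}_\mathrm{reg}\subset\mathfrak{g}(F)$. For each $\mathfrak{o}_\mathrm{reg}$ I take $X_\mathfrak{o}$ to be the companion matrix of its characteristic polynomial and $v_\mathfrak{o}^*=e_n$; the standard unfolding argument then turns the right-hand side of (\ref{gl(n) regular integral}) into $\sum_{\mathfrak{o}_\mathrm{reg}} I_\mathfrak{o}(s)$ with
\begin{eqnarray*}
    I_\mathfrak{o}(s)
    &=&
    \int_{G(\mathbb{A})} f(X_\mathfrak{o}\cdot\mathrm{ad}(g))\Phi(v_\mathfrak{o}^* g)|\det(g)|^s\,\mathrm{d}g.
\end{eqnarray*}

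\textbf{Step 2 (Tate integral per orbit).} Factoring $\int_{G(\mathbb{A})}=\int_{G_{X_\mathfrak{o}}(\mathbb{A})\backslash G(\mathbb{A})}\int_{G_{X_\mathfrak{o}}(\mathbb{A})}$ and exploiting that $h\in G_{X_\mathfrak{o}}$ commutes with $X_\mathfrak{o}$, one has $I_\mathfrak{o}(s) = \int_{G_{X_\mathfrak{o}}(\mathbb{A})\backslash G(\mathbb{A})} f(X_\mathfrak{o}\cdot\mathrm{ad}(g))|\det(g)|^s\, T_\mathfrak{o}(g,s)\,\mathrm{d}g$ where
\begin{eqnarray*}
    T_\mathfrak{o}(g,s)
    &=&
    \int_{G_{X_\mathfrak{o}}(\mathbb{A})} \Phi(v_\mathfrak{o}^* hg)|\det(h)|^s\,\mathrm{d}h.
\end{eqnarray*}
Since $X_\mathfrak{o}$ is regular, $G_{X_\mathfrak{o}}=F[X_\mathfrak{o}]^\times$ is commutative and the map $h\mapsto v_\mathfrak{o}^*h$ gives a linear isomorphism $F[X_\mathfrak{o}]\simeq V$. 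Decomposing $F[X_\mathfrak{o}]\simeq\prod_i R_i$ into local Artinian factors with residue fields $E_i$, Tate's theorem (in its standard form when the $R_i$ are fields, and an Artinian generalization otherwise) yields $T_\mathfrak{o}(g,s) = \big(\prod_i \zeta_{E_i}(s)\big)\,e_\mathfrak{o}(g,s;\Phi)$ with $e_\mathfrak{o}(g,s;\Phi)$ a finite product of elementary functions holomorphic in $s$. Since the weighted orbital integral in $g$ converges for Schwartz $f$ at regular $X_\mathfrak{o}$, each $I_\mathfrak{o}(s)$ converges for $\mathrm{Re}(s)>1$ and continues meromorphically.

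\textbf{Step 3 (summation; the main obstacle).} The crux is absolute convergence of $\sum_{\mathfrak{o}_\mathrm{reg}} I_\mathfrak{o}(s)$ for $\mathrm{Re}(s)>1$, which also yields the asserted equality in (\ref{gl(n) regular integral}) by Tonelli, and its meromorphic continuation to the entire plane. The regular-locus restriction is essential here: without it one would face the divergent integral $\int_{G(F)\backslash G(\mathbb{A})} K(g;|f|)\,E(g,s;|\Phi|)\,\mathrm{d}g$, whose non-convergence traces back precisely to non-cyclic pairs (those with $v^*$ lying in a proper $X$-invariant subspace). Convergence should follow from combining Schwartz decay of $f$, which bounds each $I_\mathfrak{o}(s)$ by a rapidly decaying function of the size of the characteristic polynomial of $X_\mathfrak{o}$, with the uniform bound on $\zeta_{E_i}(\mathrm{Re}(s))$ for $\mathrm{Re}(s)>1$ and the polynomial growth in the number of monic $p\in F[t]$ of degree $n$ with bounded height. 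The meromorphic continuation of the sum is then organized by combining the individual continuations from Step 2 with uniform estimates on $\zeta_{E_i}(s)$ away from the pole at $s=1$, following the pattern of the $\mathfrak{gl}(2)$ prototype of (\ref{gl(2) mirabolic integral}).
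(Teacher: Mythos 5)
Your Step 1 matches the paper's starting point, but from there your route diverges, and the divergence is where the gap lies. Already in Step 2 there is an error: for a regular class with characteristic polynomial $\prod_i q_i(t)^{m_i}$ the centralizer is the unit group of the Artinian ring $\prod_i F[t]/(q_i^{m_i})$, and the resulting Tate-type integral is \emph{not} an entire multiple of $\prod_i\zeta_{E_i}(s)$ alone: the nilpotent part of the centralizer contributes further factors, and the correct answer (Proposition \ref{gl(n) regular orbital integral proposition}) is an entire multiple of $\prod_i\zeta_{E_i}(s)\zeta_{E_i}(2s-1)\cdots\zeta_{E_i}(m_is-m_i+1)$. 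Relatedly, for regular non-semisimple $X_{\mathfrak{o}}$ the orbit is not closed, so the outer integral over $G_{X_{\mathfrak{o}}}(\mathbb{A})\backslash G(\mathbb{A})$ is not a convergent orbital integral of a Schwartz function; it converges only because of the weight carrying $|\det|^s$ with $\mathrm{Re}(s)>1$ and itself produces zeta factors (this is already visible in the $\mathfrak{o}_{\mathrm{par}}$ computation for $\mathfrak{gl}(2)$ in Section \ref{sec:2}), so ``the weighted orbital integral in $g$ converges'' cannot be taken for granted.

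The genuine gap is Step 3. You propose to continue each $I_{\mathfrak{o}}(s)$ separately and then sum the continuations, but an infinite sum of meromorphic functions is meromorphic only if it converges locally uniformly on compacta away from the poles; for $\mathrm{Re}(s)\le 1$ this would require bounds on $\zeta_{E}(s)$ in vertical strips that are uniform over the infinitely many extensions $E/F$ of degree at most $n$ that occur, together with control of their discriminants in terms of the heights of the characteristic polynomials and of the local elementary factors at ramified places. None of this is supplied, and it is exactly the hard analytic content your sketch defers with ``should follow.'' The paper's proof avoids it entirely: it never separates the orbits, but unfolds the whole sum to $\int_{G(\mathbb{A})}\sum_{a_1,\dots,a_n\in F}f\big(X_{a_1,\dots,a_n}\cdot\mathrm{ad}(g)\big)\Phi(e_n^*g)|\det(g)|^s\,\mathrm{d}g$ and then, via the Iwasawa decomposition and explicit coordinates on the companion matrix, peels off at each stage a single global Tate-type integral in one variable (the sum over $a_n\in F$ paired with the integral over $w\in\mathbb{A}^\times$ against $|w|^{(n-1)s}$), reducing to the $(n-1)\times(n-1)$ case by induction on $n$. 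Both the absolute convergence for $\mathrm{Re}(s)>1$ and the meromorphic continuation then come from finitely many one-variable Tate integrals over $F$ itself, with no uniformity over varying fields needed. To repair your argument you would either have to carry out the uniform estimates on $\zeta_E(s)$, or restructure Step 3 along the paper's inductive unfolding.
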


\begin{proof}
    The action of $G(F)$ on $(\mathfrak{g}\times V')_\mathrm{reg}(F)$ is faithful. By the theory of companion matrices, each orbit contains a unique representative of the form
    \begin{eqnarray*}
        \bigg(X_{a_1,\dots,a_n}=\left[\begin{matrix}
        a_1&\cdots&a_{n-1}&a_n\\
        1&\cdots&0&0\\
        \vdots&\ddots&\vdots&\vdots\\
        0&\cdots&1&0
        \end{matrix}\right], e_n^*=[0,\dots,0,1]\bigg)
    \end{eqnarray*}
    where $t^n-a_1t^{n-1}-\dots-a_n\in F[t]$ is the characteristic polynomial of $\mathfrak{o}_\mathrm{reg}$. Hence the integral (\ref{gl(n) regular integral}) is equal to
    \begin{eqnarray}
        \label{gl(n) regular integral coordinates}
        &&
        \int_{G(\mathbb{A})} \sum_{a_1,\dots,a_n\in F} f\big(X_{a_1,\dots,a_n}\cdot\mathrm{ad}(g)\big) \Phi(e_n^*g) \big|\det(g)\big|^s\mathrm{d}g\\
        &=&
        \int_{K}\Big(\int_{U(\mathbb{A})}\int_{(\mathbb{A}^\times)^{n-1}} \sum_{a_1,\dots,a_n\in F}
        f\big(X_{a_1,\dots,a_n}\cdot\mathrm{ad}(\left[\begin{smallmatrix}
        t&0\\0&1
        \end{smallmatrix}\right]uk)\big) \big|\det(t)\big|^s\mathrm{d}t\mathrm{d}u\Big)\times\nonumber\\
        &&
        \qquad\times
        \Big(\int_{Z(\mathbb{A})}
        \Phi(e_n^*zk)\big|\det(z)\big|^s\mathrm{d}z\Big)\mathrm{d}k\nonumber
    \end{eqnarray}
    by the Iwasawa decomposition $g=z\left[\begin{smallmatrix}t&0\\0&1\end{smallmatrix}\right]uk$ where $z\in Z(\mathbb{A})$, $t\in\mathrm{GL}_{n-1}(\mathbb{A})$ is diagonal, $u\in U(\mathbb{A})$ and $k\in K$.
    
    Choosing coordinates
    \begin{eqnarray*}
        t
        &=&
        \left[\begin{matrix}
        t^\circ&0\\0&1
        \end{matrix}\right]w
    \end{eqnarray*}
    where $t^\circ\in\mathrm{GL}_{n-2}(\mathbb{A})$ is diagonal and $w\in\mathbb{A}^\times$, and
    \begin{eqnarray*}
        u
        &=&
        \left[\begin{matrix}
        u^\circ&0\\0&1
        \end{matrix}\right]\left[\begin{matrix}
        I_{n-1}&v\\0&1
        \end{matrix}\right]
    \end{eqnarray*}
    where $u^\circ\in U_{n-1}(\mathbb{A})$, $I_i$ denotes the $i\times i$ identity matrix and $v\in\mathbb{A}^{n-1}$, then
    \begin{eqnarray*}
        && X_{a_1,\dots,a_n}\cdot\mathrm{ad}(\left[\begin{smallmatrix}
        t&0\\0&1
        \end{smallmatrix}\right]u)\\
        &=&
        \left[\begin{matrix}
        X_{a_1,\dots,a_{n-1}}&a_ne_1\\e_{n-1}^*&0
        \end{matrix}\right]\cdot\mathrm{ad}(\left[\begin{smallmatrix}
        wI_{n-1}&0\\0&1
        \end{smallmatrix}\right]\left[\begin{smallmatrix}
        t^\circ&0\\0&I_2
        \end{smallmatrix}\right]\left[\begin{smallmatrix}
        u^\circ&0\\0&1
        \end{smallmatrix}\right]\left[\begin{smallmatrix}
        I_{n-1}&v\\0&1
        \end{smallmatrix}\right])\\
        &=&
        \left[\begin{matrix}
        X_{a_1,\dots,a_{n-1}}\cdot\mathrm{ad}(\left[\begin{smallmatrix}
        t^\circ&0\\0&1
        \end{smallmatrix}\right]u^\circ)-wve_n^*&\vdots\\we_{n-1}^*&we_{n-1}^*v
        \end{matrix}\right]
    \end{eqnarray*}
    where the last column is equal to the transpose of
    \begin{eqnarray*}
        \left[\begin{matrix}
        \displaystyle\frac{a_n}{wt_1^\circ}
        &\displaystyle\frac{t_1^\circ v_1}{t_2^\circ}
        &\displaystyle\frac{t_2^\circ v_2}{t_3^\circ}
        &\dots
        &\displaystyle\frac{t_{n-3}^\circ v_{n-3}}{t_{n-2}^\circ}
        &\displaystyle\frac{t_{n-2}^\circ v_{n-2}}{1}
        &wv_{n-1}
        \end{matrix}\right]
    \end{eqnarray*}
    upto a unimodular change of variables of $v_1,\dots,v_{n-1}$, where $v_i$ denotes the $i$th entry of $v$ for $1\leq i\leq n-2$ and $t_j^\circ$ denotes the $j$th diagonal entry of $t^\circ$ for $1\leq j\leq n-1$.
    Hence the first factor of the inner integral in (\ref{gl(n) regular integral coordinates}) is equal to
    \begin{eqnarray*}
        && \int_{U(\mathbb{A})}\int_{(\mathbb{A}^\times)^{n-1}}\sum_{a_1,\dots,a_n\in F}
        f\big(X_{a_1,\dots,a_n}\cdot\mathrm{ad}(\left[\begin{smallmatrix}
        t&0\\0&1
        \end{smallmatrix}\right]uk)\big) \big|\det(t)\big|^s\mathrm{d}t\mathrm{d}u\\
        &=&
        \int\!\!\!...\!\!\!\int_\mathbb{A}\int_{\mathbb{A}^\times}\sum_{a_n\in F} \Big(\int_{U_{n-1}(\mathbb{A})}\int_{(\mathbb{A}^\times)^{n-2}}\nonumber\\
        &&\quad\times \sum_{a_1,\dots,a_{n-1}\in F}
        f\big(\bigg[\begin{smallmatrix}
        X_{a_1,\dots,a_{n-1}}\cdot\mathrm{ad}(\left[\begin{smallmatrix}
        t^\circ&0\\0&1
        \end{smallmatrix}\right]u^\circ)-wve_n^*&\vdots\\we_{n-1}^*&we_{n-1}^*v
        \end{smallmatrix}\bigg]\cdot\mathrm{ad}(k)\big)\times\\
        &&\qquad\times\big|\det(t^\circ)\big|^s\mathrm{d}t^\circ\mathrm{d}u^\circ\Big)|w|^{(n-1)s}\mathrm{d}^\times w\prod_{i=1}^{n-1}\mathrm{d}v_i
    \end{eqnarray*}
    which converges absolutely for $\mathrm{Re}(s)>1$ and continues meromorphically over $\mathbb{C}$ by induction on $n$.
    
    The second factor of the inner integral in (\ref{gl(n) regular integral coordinates}) is equal to
    \begin{eqnarray*}
        \int_{\mathbb{A}^\times}\Phi\big([0,\dots,0,z]k\big)\big|z\big|^{ns}\mathrm{d}^\times z
    \end{eqnarray*}
    which converges absolutely for $\mathrm{Re}(s)>1$ and continues meromorphically over $\mathbb{C}$, hence so does the entire integral (\ref{gl(n) regular integral}).
\end{proof}

\begin{proposition}
     \label{gl(n) regular orbital integral proposition}
     If $\mathfrak{o}_\mathrm{reg}$ has characteristic polynomial $p(t)=\prod_iq_i(t)^{m_i}$ where $q_i(t)\in F[t]$ are all distinct and irreducible, then
    \begin{eqnarray*}
        I_{\mathfrak{o}_\mathrm{reg}}(s)
        &=&
        \int_{G(F)\backslash G(\mathbb{A})} \sum_{\begin{subarray}{c}(X,v^*)\in(\mathfrak{g}\times V')_\mathrm{reg}(F)\\X\in\mathfrak{o}_\mathrm{reg}\end{subarray}} f\big(X\cdot\mathrm{ad}(g)\big) \Phi(v^*g)\big|\det(g)\big|^s \mathrm{d}g
    \end{eqnarray*}
    is the product of $\prod_i\zeta_{E_i}(s)\zeta_{E_i}(2s-1)\dots\zeta_{E_i}(m_is-m_i+1)$ by an entire function in $s$, where $E_i\simeq F[t]/(q_i)$.
\end{proposition}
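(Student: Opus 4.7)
The plan is to follow the strategy of Jacquet--Zagier for $\mathfrak{gl}(2)$: unfold the orbit sum via the centralizer $G_{X_p}$, factor the resulting inner integral into generalized Tate integrals using the Chinese remainder decomposition $A := F[t]/(p(t)) \cong \prod_i R_i$ with $R_i = F[t]/(q_i^{m_i})$, and then analyze the remaining outer orbital integral of $f$ using the Jordan decomposition of $X_p$.

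By Proposition \ref{gl(n) regular integral proposition}, the pair $(X_p, e_n^*)$ represents the unique $G(F)$-orbit with trivial stabilizer in the relevant regular stratum, so the orbit sum unfolds into
\[
I_{\mathfrak{o}_\mathrm{reg}}(s) = \int_{G(\mathbb{A})} f(X_p \cdot \mathrm{ad}(g)) \Phi(e_n^* g) |\det(g)|^s \, \mathrm{d}g.
\]
The basis $e_n^*, e_n^* X_p, \ldots, e_n^* X_p^{n-1}$ of $V$ identifies $V$ with $A$ as $F$-vector spaces, the subvariety $\{e_n^* \alpha : \alpha \in G_{X_p}\}$ with $A^\times$, and realizes the centralizer $G_{X_p}$ as the $F$-group scheme of units of $A$, which factorizes as $\prod_i R_i^\times$ by the Chinese remainder theorem. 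Changing variables $g = \alpha h$ with $\alpha \in G_{X_p}(\mathbb{A})$, $h \in G_{X_p}(\mathbb{A}) \backslash G(\mathbb{A})$ and using $|\det(\alpha)| = \prod_i |N_{R_i/F}(\alpha_i)|$, the integral becomes (for tensor-decomposable $\Phi$, which suffices by linearity)
\[
I_{\mathfrak{o}_\mathrm{reg}}(s) = \int_{G_{X_p}(\mathbb{A}) \backslash G(\mathbb{A})} f(X_p \cdot \mathrm{ad}(h)) |\det(h)|^s \prod_i J_i(h, s) \, \mathrm{d}h,
\]
where $J_i(h, s) = \int_{R_{i,\mathbb{A}}^\times} \phi_i^h(\alpha_i) |N_{R_i/F}(\alpha_i)|^s \, \mathrm{d}^\times \alpha_i$ is a generalized Tate integral on the local Artin $F$-algebra $R_i$. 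Hensel's lemma applied to the residue map $R_i \twoheadrightarrow E_i$ yields a canonical section and thereby an $F$-algebra isomorphism $R_i \cong E_i[u_i]/(u_i^{m_i})$; writing $\alpha_i = \beta_i \gamma_i$ with $\beta_i \in E_i^\times$ semisimple and $\gamma_i \in 1 + (u_i)$ unipotent gives $|N_{R_i/F}(\alpha_i)| = |\beta_i|_{E_i}^{m_i}$. After substituting $\delta_i = \beta_i(\gamma_i - 1) \in (u_i) R_i$ to absorb the Jacobian $|\beta_i|_{E_i}^{m_i - 1}$ from the unipotent measure and integrating out $\delta_i$ against the Schwartz function $\phi_i^h$, the remaining classical Tate integral on $\mathbb{A}_{E_i}^\times$ evaluates to $J_i(h, s) = \zeta_{E_i}(m_i s - m_i + 1) \cdot e_i(h, s)$ for a finite Euler product of elementary factors $e_i(h, s)$.

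The main obstacle is the outer orbital integral
\[
O(s) = \int_{G_{X_p}(\mathbb{A}) \backslash G(\mathbb{A})} f(X_p \cdot \mathrm{ad}(h)) |\det(h)|^s \prod_i e_i(h, s) \, \mathrm{d}h,
\]
from which the remaining zeta factors must emerge. Here I would invoke the Jordan decomposition $X_p = X_s + X_n$ with $X_s$ semisimple and $X_n$ nilpotent commuting: the semisimple centralizer $G_{X_s} \cong \prod_i \mathrm{Res}_{E_i/F} \mathrm{GL}(m_i)$ contains $G_{X_p}$ with quotient $G_{X_p} \backslash G_{X_s}$ identified via $\eta \mapsto \eta^{-1} X_n \eta$ with the $G_{X_s}$-orbit of the regular nilpotent $X_n$ in $\mathfrak{g}_{X_s}$. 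Decomposing $\int_{G_{X_p}(\mathbb{A}) \backslash G(\mathbb{A})} = \int_{G_{X_s}(\mathbb{A}) \backslash G(\mathbb{A})} \int_{G_{X_p}(\mathbb{A}) \backslash G_{X_s}(\mathbb{A})}$ expresses $O(s)$ as an absolutely convergent integral over $G_{X_s}(\mathbb{A}) \backslash G(\mathbb{A})$ (entire in $s$ since $X_s$ is semisimple) of a fiber integral that is precisely the weighted regular nilpotent orbital integral of $X_n$ in $G_{X_s}$, which in turn factors over $i$ into weighted regular nilpotent orbital integrals in $\mathrm{GL}(m_i)(\mathbb{A}_{E_i})$. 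For each factor, I would proceed by induction on $m_i$ (with base case $m_i = 1$ trivial and $m_i = 2$ recovering the parabolic case of \cite{JZ87}) using a mirabolic/Iwasawa decomposition on $\mathrm{GL}(m_i)$ over $E_i$: peeling off one mirabolic step at the $k$-th induction extracts a Tate integral along a unipotent direction contributing $\zeta_{E_i}((k+1) s - k)$, producing successively the factors $\zeta_{E_i}(s), \zeta_{E_i}(2s - 1), \ldots, \zeta_{E_i}((m_i - 1) s - m_i + 2)$. Multiplying by the $\zeta_{E_i}(m_i s - m_i + 1)$ from the inner integral yields the full product $\prod_i \zeta_{E_i}(s) \zeta_{E_i}(2s - 1) \cdots \zeta_{E_i}(m_i s - m_i + 1)$ times an entire function, as claimed.
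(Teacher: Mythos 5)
Your proposal is correct in substance and shares the paper's skeleton (unfold via the free action to an integral over $G(\mathbb{A})$, factor over the distinct irreducible factors $q_i$, base-change to $E_i$, extract Tate integrals inductively), but it organizes the key induction differently. The paper conjugates each primary block to a Jordan block over $E_i$ and then runs a single recursion through the lower-triangular Borel (Lemma \ref{gl(n) regular orbital integral lemma}): at the $k$-th layer the Toeplitz coordinates $z,u_1,\dots,u_{k-1}$ parametrize the centralizer of the $k\times k$ Jordan block, and the Tate integral in $z$ (against $\Phi$ at the top layer, against a slice of $f$ at the lower layers) yields $\zeta_{E_i}(ks-k+1)$; the centralizer integral is never isolated as such. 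You instead split off the full centralizer integral first, via $V\simeq F[t]/(p)$ and the Chinese remainder decomposition into Artin algebras $R_i\simeq E_i[u]/(u^{m_i})$, obtaining $\zeta_{E_i}(m_is-m_i+1)$ from the norm computation $|N_{R_i/F}(\alpha_i)|=|\beta_i|_{E_i}^{m_i}$ and the Jacobian bookkeeping (which checks out), and then attribute the remaining factors $\zeta_{E_i}(s),\dots,\zeta_{E_i}((m_i-1)s-m_i+2)$ to a weighted regular nilpotent orbital integral over $G_{X_p}(\mathbb{A})\backslash G_{X_s}(\mathbb{A})$ obtained from the Jordan decomposition. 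This is a clean and arguably more conceptual bookkeeping (it makes transparent why exactly one factor is ``shifted'' to $m_is-m_i+1$ and matches the $\mathfrak{gl}(2)$ parabolic computation of \cite{JZ87}), at the cost of deferring the real work to the nilpotent induction, which you only sketch: that induction is precisely the content of Lemma \ref{gl(n) regular orbital integral lemma} with its first layer removed, and the factors $\zeta_{E_i}(ks-k+1)$ for $k<m_i$ there arise as Tate integrals of partial restrictions of $f$ along the unipotent directions, not of $\Phi$ --- a point worth making explicit, as is the uniformity in $h$ of the elementary factors $e_i(h,s)$ needed for the outer integral to converge and remain entire (an issue the paper also treats lightly). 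Note also that both your identification $R_i\simeq E_i[u_i]/(u_i^{m_i})$ via Hensel and the paper's Jordan-block-over-$E_i$ representative implicitly assume $q_i$ separable.
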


\begin{proof}
    The orbit $(\mathfrak{o}_\mathrm{reg}\times V')_\mathrm{reg}$ contains a representative of the form
    \begin{eqnarray*}
        \bigg(X_{\mathfrak{o}_\mathrm{reg}}=\left[\begin{matrix}
        X_1&0&\cdots\\
        0&X_2&\cdots\\
        0&0&\ddots
        \end{matrix}\right], v_{\mathfrak{o}_\mathrm{reg}}^*=[e_{d_1m_1}^*,e_{d_2m_2}^*,\dots]\bigg)
    \end{eqnarray*}
    where $X_i$ is the $d_im_i\times d_im_i$ companion matrix with characteristic polynomial $q_i(t)^{m_i}$ with $\deg(q_i)=d_i$ and $e_m^*=[0,\dots,0,1]\in F^m$. Since $G(F)$ acts faithfully, the integral $I_{\mathfrak{o}_\mathrm{reg}}(s)$ is equal to
    \begin{eqnarray*}
        &&\int_{G(\mathbb{A})} f\big(X_{\mathfrak{o}_\mathrm{reg}}\cdot\mathrm{ad}(g)\big)\Phi(v_{\mathfrak{o}_\mathrm{reg}}^*g) \big|\det(g)\big|^s\mathrm{d}g\\
        &=&\int_{\prod_i\mathrm{GL}_{d_im_i}(\mathbb{A})\backslash G(\mathbb{A})} \Big(\int\!\!\!...\!\!\!\int_{\mathrm{GL}_{d_im_i}(\mathbb{A})} f\big(\left[\begin{smallmatrix}
        X_1\cdot\mathrm{ad}(g_1)&0&\dots\\
        0&X_2\cdot\mathrm{ad}(g_2)&\dots\\
        0&0&\dots
        \end{smallmatrix}\right]\cdot\mathrm{ad}(h)\big)\times\\
        &&\quad\times\Phi\big([e_{d_1m_1}^*g_1,e_{d_2m_2}^*g_2,\dots]h\big) \prod_i\big|\det(g_i)\big|^s\mathrm{d}g_i\Big)\mathrm{d}h,
    \end{eqnarray*}
    hence it suffices to show that the analogous integral
    \begin{eqnarray*}
        I_i(s)
        &=&
        \int_{\mathrm{GL}_{d_im_i}(\mathbb{A})} f_i\big(X_i\cdot\mathrm{ad}(g_i)\big) \Phi_i(e_{d_im_i}^*g_i)\big|\det(g_i)\big|^s\mathrm{d}g_i,
    \end{eqnarray*}
    where $f_i$ and $\Phi_i$ are Schwartz functions over $\mathfrak{gl}_{m_id_i}(\mathbb{A})$ and $\mathbb{A}^{m_id_i}$ respectively, is an entire multiple of $\zeta_{E_i}(s)\zeta_{E_i}(2s-1)\dots\zeta_{E_i}(m_is-m_i+1)$.
    
    The $d_im_i\times d_im_i$ matrix $X_i$ is conjugate to the $m_i\times m_i$ Jordan block matrix
    \begin{eqnarray*}
        \left[\begin{matrix}
        X_{q_i}&0&\cdots&0&0\\
        I_{d_i}&X_{q_i}&\cdots&0&0\\
        \vdots&\ddots&\ddots&\vdots&\vdots\\
        0&0&\ddots&X_{q_i}&0\\
        0&0&\cdots&I_{d_i}&X_{q_i}
        \end{matrix}\right]
    \end{eqnarray*}
    where $X_{q_i}$ is the $d_i\times d_i$ companion matrix with characteristic polynomial $q_i(t)$ and $I_{d_i}$ denotes the $d_i\times d_i$ identity matrix. The centralizer of $X_{q_i}$ in $\mathrm{GL}_{d_i}(F)$ is isomorphic to $E_i^\times$, hence under this identification $\mathbb{A}_{E_i}^{m_i}\simeq\mathbb{A}_F^{d_im_i}$, $\mathrm{GL}_{m_i}(\mathbb{A}_{E_i})\subset\mathrm{GL}_{d_im_i}(\mathbb{A}_F)$ and choosing a new representative
    \begin{eqnarray*}
        \bigg(X_{E_i}=\left[\begin{matrix}
        x_{E_i}&0_{E_i}&\cdots&0_{E_i}\\
        1_{E_i}&x_{E_i}&\cdots&0_{E_i}\\
        \vdots&\ddots&\ddots&\vdots\\
        0_{E_i}&0_{E_i}&\cdots&x_{E_i}
        \end{matrix}\right], e_{E_i}^*=[0_{E_i},\dots,0_{E_i},1_{E_i}]\bigg)
    \end{eqnarray*}
    where $x_{E_i}\in E_i$ corresponds to $X_{p_i}$ and $0_{E_i}$ and $1_{E_i}$ denotes the zero and unit element of $E_i$ respectively, the integral $I_i(s)$ is equal to
    \begin{eqnarray*}
        &&\int_{\mathrm{GL}_{m_i}(\mathbb{A}_{E_i})\backslash\mathrm{GL}_{d_im_i}(\mathbb{A}_F)}\\
        &&\quad\times \Big(\int_{\mathrm{GL}_{m_i}(\mathbb{A}_{E_i})} f_i\big(X_{E_i}\cdot\mathrm{ad}(g_ih_i)\big) \Phi_i(e_{E_i}^*g_ih_i)\big|\det(g_ih_i)\big|^s\mathrm{d}g_i\Big)\mathrm{d}h_i.
    \end{eqnarray*}
    Hence it suffices to analyze the analogous inner integral defined over $E_i$, which is treated in the next lemma.
\end{proof}

\begin{lemma}
    \label{gl(n) regular orbital integral lemma}
    Let $X_d=\left[\begin{smallmatrix}
    d&0&\dots\\
    1&d&\dots\\
    \dots&\dots&\dots
    \end{smallmatrix}\right]\in G(F)$ be a Jordan block with ones along the subdiagonal and $e_n^*=[0,\dots,0,1]\in V(F)$, then the integral
    \begin{eqnarray*}
        I_d(s)
        &=&
        \int_{G(\mathbb{A})} f\big(X_d\cdot\mathrm{ad}(g)\big)\Phi(e_n^*g) \big|\det(g)\big|^s\mathrm{d}g
    \end{eqnarray*}
    is the product of $\zeta_F(s)\zeta_F(2s-1)\dots\zeta_F(ns-n+1)$ by an entire function.
\end{lemma}

\begin{proof}
    By the Iwasawa decomposition it suffices to evaluate the analogous integral $I_{d,B^-}(s)$ which is integrated over $B^-(\mathbb{A})$ instead of $G(\mathbb{A})$. Choosing coordinates
    \begin{eqnarray*}
        b
        &=&
        \left[\begin{matrix}
        z&0&\cdots&0&0&0\\
        u_{n-1}&z&\cdots&0&0&0\\
        u_{n-2}&u_{n-1}&\ddots&0&0&0\\
        \vdots&\ddots&\ddots&\ddots&\vdots&\vdots\\
        u_2&u_3&\ddots&u_{n-1}&z&0\\
        u_1&u_2&\cdots&u_{n-2}&u_{n-1}&z
        \end{matrix}\right]\left[\begin{matrix}
        b^\circ&0\\0&1
        \end{matrix}\right],
    \end{eqnarray*}
    where $z\in\mathbb{A}^\times$, $u_1,\dots,u_{n-1}\in\mathbb{A}$ and $b^\circ\in\mathrm{GL}_{n-1}(\mathbb{A})$ is lower triangular, then the integral $I_{d,B^-}(s)$ is equal to
    \begin{eqnarray*}
        &&
        \int\!\!\!...\!\!\!\int_\mathbb{A}\int_{\mathbb{A}^\times} \Big(\int_{B_{n-1}^-(\mathbb{A})} f\big(X_d\cdot\mathrm{ad}(\left[\begin{smallmatrix}
        z&0&\dots\\
        u_{n-1}&z&\dots\\
        \dots&\dots&\dots
        \end{smallmatrix}\right]\left[\begin{smallmatrix}
        b^\circ&0\\0&1
        \end{smallmatrix}\right])\big)\times\\
        &&
        \quad\times\Phi\big(e_n^*\left[\begin{smallmatrix}
        z&0&\dots\\
        u_{n-1}&z&\dots\\
        \dots&\dots&\dots
        \end{smallmatrix}\right]\left[\begin{smallmatrix}
        b^\circ&0\\0&1
        \end{smallmatrix}\right]\big) \big|\det(b^\circ)\big|^s\big|\det(b^\circ)\big|\mathrm{d}b^\circ\Big)\big|z\big|^{ns}\mathrm{d}^\times z\prod_{i=1}^{n-1}\frac{\mathrm{d}u_i}{|z|}\\
        &=&
        \int\!\!\!...\!\!\!\int_\mathbb{A}\int_{\mathbb{A}^\times} \Big(\int_{B_{n-1}^-(\mathbb{A})} f\big(X_d\cdot\mathrm{ad}(\left[\begin{smallmatrix}
        b^\circ&0\\0&1
        \end{smallmatrix}\right])\big)\times\\
        &&
        \quad\times\Phi\big([u_1,\dots,u_{n-1},z]\left[\begin{smallmatrix}
        b^\circ&0\\0&1
        \end{smallmatrix}\right]\big) \big|\det(b^\circ)\big|^{s+1}\mathrm{d}b^\circ\Big)\big|z\big|^{ns-n+1}\mathrm{d}^\times z\prod_{i=1}^{n-1}\mathrm{d}u_i\\
        &=&
        \int\!\!\!...\!\!\!\int_\mathbb{A}\int_{\mathbb{A}^\times} \Big(\int_{B_{n-1}^-(\mathbb{A})} f\big(\left[\begin{smallmatrix}
        X_d^\circ\cdot\mathrm{ad}(b^\circ)&0\\e_{n-1}^*b^\circ&d
        \end{smallmatrix}\right]\big)\big|\det(b^\circ)\big|^s\mathrm{d}b^\circ\Big)\times\\
        &&
        \quad\times\Phi\big([v_1,\dots,v_{n-1},z]\big) \big|z\big|^{ns-n+1}\mathrm{d}^\times z\prod_{i=1}^{n-1}\mathrm{d}v_i
    \end{eqnarray*}
    where $X_d^\circ$ denotes the analogous $(n-1)\times(n-1)$ lower triangular Jordan block and $[v_1,\dots,v_{n-1}]=[u_1,\dots,u_{n-1}]b^\circ$. Hence $I_{d,B^-}(s)$ is the product of $I_{d,B_{n-1}^-}(s)$ by a Tate integral equal to an entire multiple of $\zeta_F(ns-n+1)$, and the lemma follows by induction on $n$.
\end{proof}

\section{Cancellation of singularities}
\label{sec:4}

\paragraph{A mirabolic analogue of the parabolic descent operator} Let $P\subset G$ be the parabolic subgroup associated to a partial flag $V=V_r\supset\dots\supset V_0=0$ with successive subquotients $W_i=V_i/V_{i-1}$. Let $\widetilde{f}$ denote the partial Fourier transform of $f$ defined by
\begin{eqnarray*}
    \widetilde{f}(X,U_r^*,\dots,U_2^*)
    &=&
    \int_{\mathfrak{u}_P(\mathbb{A})} f(\tilde{X}+U) \prod_{i=2}^r\overline{\psi}\big(\mathrm{tr}(U_i^*U)\big)\mathrm{d}U
\end{eqnarray*}
where $U_i^*:W_{i-1}(\mathbb{A})\to W_i(\mathbb{A})$ commutes with $X$ and $\tilde{X}$ is a lift of $X$ in $\mathfrak{p}(\mathbb{A})$. In general $\widetilde{f}$ is defined upto a multiplicative constant depending on the choice of the lift $\tilde{X}$. However if $P$ is a standard parabolic subgroup, then $\widetilde{f}$ is uniquely defined by choosing $\tilde{X}\in\mathfrak{m}_P$.

Then the parabolic descent $(f,\Phi)_P\in\mathcal{S}(\mathfrak{m}_P(\mathbb{A})\times W_r(\mathbb{A})\times\dots\times W_1(\mathbb{A}))$ is defined by
\begin{eqnarray*}
    (f,\Phi)_P(X,w_r^*,\dots,w_1^*)
    &=&
    \int_K\int\!\!\!...\!\!\!\int \widetilde{f^k}(X,U_r^*,\dots,U_2^*) \prod_{i=2}^r\mathrm{d}U_i^*\Phi(w_1^*k)\mathrm{d}k
\end{eqnarray*}
where $f^k$ denotes the composite function $f\circ\mathrm{ad}(k)$ and the inner integral is integrated over the affine space consisting of all $U_i^*\in\mathrm{Hom}(W_{i-1}(\mathbb{A}),W_i(\mathbb{A}))$ such that $w_i^*=w_{i-1}^*U_i^*$. In particular,
\begin{itemize}
    \item if $X\in\mathfrak{m}_P(\mathbb{A})$ is regular semisimple which implies that $U_i^*=0$ for all $i=2,\dots,r$, then \begin{eqnarray*}
        (f,\Phi)_P(X,0,\dots,0,w_1^*)
        &=&
        f_P(X)\Phi(w_1^*);
    \end{eqnarray*}
    \item if $X\in\mathfrak{m}_P(\mathbb{A})$, $w_i^*\in W_i(\mathbb{A})$ and $(X|_{W_i},w_i^*)\in(\mathfrak{gl}(W_i)\times W_i')_\mathrm{reg}$ for all $i=1,\dots,r$, then
    \begin{eqnarray*}
        (f,\Phi)_P(X,w_r^*,\dots,w_1^*)
        &=&
        \widetilde{f}(X,U_r^*,\dots,U_2^*)\Phi(w_1^*)
    \end{eqnarray*}
    where $U_i^*$ denotes the unique $\mathbb{A}[X]$-linear map from $W_{i-1}(\mathbb{A})$ to $W_i(\mathbb{A})$ such that $w_i^*=w_{i-1}^*U_i^*$ for all $i=2,\dots,r$.
\end{itemize}

\begin{definition}\label{regularized mirabolic integral definition}
    Define the regularized mirabolic integrals $I(s)$ and $I_{\mathfrak{o}_\lambda}(s)$, where $\lambda=[n_r,\dots,n_1]\vdash n$ and $\mathfrak{o}_\lambda\subset\mathfrak{g}(F)$ is a conjugacy class which has Frobenius normal form of partition type $\lambda$, by
    \begin{eqnarray*}
        I(s)
        &=&
        \int_{G(F)\backslash G(\mathbb{A})} \sum_P(-1)^{\ell(P)} K_P(g;f) \sum_{v^*\in(V^{U_P})'(F)} \Phi(v^*g)\big|\det(g)\big|^s \mathrm{d}g
    \end{eqnarray*}
    where the summation $\sum_P$ ranges over all parabolic subgroups $P\subset G$ defined over $F$ and $V^{U_P}=V_1\subset V$ denotes the subspace consisting of $U_P$-invariants, and
    \begin{eqnarray*}
        I_{\mathfrak{o}_\lambda}(s)
        &=&
        \int_{M_\lambda(F)\backslash M_\lambda(\mathbb{A})} \sideset{}{'} \sum_{\begin{subarray}{c}(X,w_r^*,\dots,w_1^*)\in(\mathfrak{m}_\lambda\times W_r'\times\dots\times W_1')(F)\\X\in \mathfrak{o}_\lambda\end{subarray}}\\
        &&\quad\times (f,\Phi)_{P_\lambda}\big(X\cdot\mathrm{ad}(h),w_r^*h,\dots,w_1^*h\big) \big|\det(h)\big|^s \mathrm{d}h
    \end{eqnarray*}
    where the primed summation ranges over all sequences $(X,w_r^*,\dots,w_1^*)$ such that $(X|_{W_i},w_i^*)\in(\mathfrak{gl}(W_i)\times W_i')_\mathrm{reg}$ for all $i=1,\dots,r$.
\end{definition}

\begin{proposition}
    \label{gl(n) singular integral proposition}
    The regularized mirabolic integrals
    \begin{eqnarray*}
        I(s)
        &=&
        \sum_{\lambda\vdash n} \sum_{\mathfrak{o}_\lambda} I_{\mathfrak{o}_\lambda}(s)
    \end{eqnarray*}
    converge absolutely if $\mathrm{Re}(s)>1$, and continue meromorphically over the entire complex plane, where the summation $\sum_{\mathfrak{o}_\lambda}$ ranges over all conjugacy classes $\mathfrak{o}_\lambda\subset\mathfrak{g}(F)$ which have Frobenius normal form of partition type $\lambda$.
\end{proposition}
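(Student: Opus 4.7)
The plan is to unfold $I(s)$ using the definition of $K_P$, apply Poisson summation to the unipotent integral $\int_{\mathfrak{u}_P(\mathbb{A})/\mathfrak{u}_P(F)}f((X+U)\cdot\mathrm{ad}(g))\,\mathrm{d}U$ to rewrite it as a sum of partial Fourier transforms $\widetilde{f^g}(X,U_r^*,\ldots,U_2^*)$ indexed by characters $U_i^*$ of the successive subquotients of $\mathfrak{u}_P$, and then reorganize the resulting multi-sum over $(P,X,U^*,v^*)$ according to orbits of pairs $(X,v^*)\in\mathfrak{g}(F)\times V'(F)$ under the diagonal $G(F)$-action. For each such pair there is a canonical flag $V=V_r\supset\cdots\supset V_0=0$ built inductively by taking $V_1=\mathrm{span}(v^*,v^*X,v^*X^2,\ldots)$, passing to $V/V_1$ with a chosen cyclic vector of $X|_{V/V_1}$, and iterating. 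The successive subquotients $W_i$ satisfy $(X|_{W_i},w_i^*)\in(\mathfrak{gl}(W_i)\times W_i')_\mathrm{reg}$, the partition $\lambda=[\dim W_r,\ldots,\dim W_1]\vdash n$ equals the Frobenius normal form partition of $X$, and $(X,w_r^*,\ldots,w_1^*)$ is exactly a regular tuple of the form indexing the primed sum in Definition~\ref{regularized mirabolic integral definition}.

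With $(X,v^*,U_2^*,\ldots,U_r^*)$ fixed, the alternating sum $\sum_P(-1)^{\ell(P)}$ over parabolics with $X\in\mathfrak{p}(F)$ and $v^*\in(V^{U_P})'(F)$ should collapse, by a combinatorial Euler-characteristic argument, to a single contribution from the standard parabolic $P_\lambda$ attached to the canonical flag, with sign $(-1)^{r+1}$. This cancellation is the mirabolic Lie-algebra analogue of Arthur's fine geometric expansion. Granting it, the surviving contribution, after the Iwasawa decomposition $G(\mathbb{A})=P_\lambda(\mathbb{A})K$, absorbs the $K$-average, the integration over $U_i^*$, and the partial Fourier transform back into the kernel $(f,\Phi)_{P_\lambda}$ of Definition~\ref{regularized mirabolic integral definition}, converting the $G(F)\backslash G(\mathbb{A})$-integral into the $M_\lambda(F)\backslash M_\lambda(\mathbb{A})$-integral defining $I_{\mathfrak{o}_\lambda}(s)$, with the Frobenius class $\mathfrak{o}_\lambda$ parametrizing the resulting $M_\lambda(F)$-orbits on the regular locus.

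For convergence and meromorphic continuation of each $I_{\mathfrak{o}_\lambda}(s)$, use $M_\lambda\simeq\prod_{i=1}^r\mathrm{GL}(W_i)$ and the observation that, on regular tuples, the integrand $(f,\Phi)_{P_\lambda}\bigl(X\cdot\mathrm{ad}(h),w_r^*h,\ldots,w_1^*h\bigr)$ factors blockwise as a product of Schwartz functions on $\mathfrak{gl}(W_i)(\mathbb{A})\times W_i(\mathbb{A})$, reducing each $I_{\mathfrak{o}_\lambda}(s)$ to a product of integrals of the form treated in Proposition~\ref{gl(n) regular integral proposition} for each factor $\mathrm{GL}_{n_i}$. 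Absolute convergence on $\mathrm{Re}(s)>1$ and meromorphic continuation to $\mathbb{C}$ then follow directly from Propositions~\ref{gl(n) regular integral proposition} and~\ref{gl(n) regular orbital integral proposition} applied to each block, while absolute summability of the double sum over $\lambda$ and $\mathfrak{o}_\lambda$ is reduced, by induction on $n$, to uniform Schwartz decay bounds on the orbital data. The main obstacle is the alternating-sum cancellation and the identification of surviving terms with the primed sum of Definition~\ref{regularized mirabolic integral definition}; as noted in the referee's comments, this either must be handled at the level of distributions or else circumvented by taking $I(s):=\sum_\lambda\sum_{\mathfrak{o}_\lambda}I_{\mathfrak{o}_\lambda}(s)$ as a definition and recovering the cancellation identity only at the end via the Fourier invariance required by Theorem~\ref{mirabolic trace formula}.
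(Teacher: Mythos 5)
Your convergence argument (reduce each $I_{\mathfrak{o}_\lambda}(s)$ to the blocks of $M_\lambda\simeq\prod_i\mathrm{GL}(W_i)$ and invoke Proposition \ref{gl(n) regular integral proposition}) matches the paper, which disposes of this part in one line by setting $I_\lambda(s)=\sum_{\mathfrak{o}_\lambda}I_{\mathfrak{o}_\lambda}(s)$ and citing Proposition \ref{gl(n) regular integral proposition}. The gap is in the central step, the identity $I(s)=\sum_\lambda I_\lambda(s)$. Your claim that, for fixed $(X,v^*,U_2^*,\dots,U_r^*)$, the alternating sum $\sum_P(-1)^{\ell(P)}$ collapses by a combinatorial Euler-characteristic argument to the single parabolic $P_\lambda$ with sign $(-1)^{r+1}$ does not work as stated. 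First, the dual variables $U_i^*$ arise from Poisson summation relative to a \emph{specific} unipotent radical $\mathfrak{u}_P$, so they cannot be held fixed while $P$ varies; the higher cyclic vectors $w_2^*,\dots,w_r^*$ simply do not exist in the original sum over $(P,X,v^*)$ and must be \emph{created} by the cancellation itself. Second, the sign is wrong: the final formula is the unsigned sum $\sum_\lambda\sum_{\mathfrak{o}_\lambda}I_{\mathfrak{o}_\lambda}(s)$, and already for $\mathfrak{gl}(2)$ the hyperbolic and parabolic classes ($r=2$) enter with coefficient $+1$, not $(-1)^{r+1}=-1$.

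The paper's actual mechanism is an induction on $n$. One stratifies by $m=\dim\mathrm{span}(v^*,v^*X,v^*X^2,\dots)$; for fixed $m$ the compatible parabolics occur in pairs $P_+^\circ,P_-^\circ$ indexed by parabolics $P^\circ\subset\mathrm{GL}(n-m)$, whose lengths differ by one and whose unipotent radicals differ exactly by the block $\mathrm{Hom}(F^m,W_2)$. The difference of the two parabolic kernels, integrated over $\mathrm{Hom}(\mathbb{A}^{n-m},\mathbb{A}^m)/\mathrm{Hom}(F^{n-m},F^m)$, is evaluated by Poisson summation as the sum over \emph{nonzero} intertwiners $Y^*\in\mathrm{Hom}(F^m,W_2(F))$ of partial Fourier transforms $\widetilde{f}$: the pairing kills only the zero Fourier mode, with an overall $+$ sign, and each nonzero $Y^*$ supplies the next cyclic vector $w_2^*=w_1^*Y^*$. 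The remaining alternating sum over $P^\circ\subset\mathrm{GL}(n-m)$ is then handled by the induction hypothesis, so the cancellation is recursive and Poisson-theoretic, not a one-step collapse onto $P_\lambda$. (You are right, as the paper's own note to the reader concedes, that the divergent sum $\sum_P$ must first be given a rigorous meaning before these rearrangements are licit; but the specific collapse you propose would be incorrect even formally.)
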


\begin{proof}
    Define
    \begin{eqnarray*}
        I_\lambda(s)
        &=&
        \int_{M_\lambda(F)\backslash M_\lambda(\mathbb{A})} \sideset{}{'} \sum_{\begin{subarray}{c}X\in\mathfrak{m}_\lambda(F)\\w_i^*\in W_i'(F)\end{subarray}} (f,\Phi)_{P_\lambda}\big(X\cdot\mathrm{ad}(h),\dots,w_1^*h\big) \big|\det(h)\big|^s \mathrm{d}h
    \end{eqnarray*}
    where the primed summation ranges over all $(X|_{W_i},w_i^*)\in(\mathfrak{gl}(W_i)\times W_i')_\mathrm{reg}$, then by Proposition \ref{gl(n) regular integral proposition}
    \begin{eqnarray*}
        I_\lambda(s)
        &=&
        \sum_{\mathfrak{o}_\lambda} I_{\mathfrak{o}_\lambda}(s)
    \end{eqnarray*}
    converges absolutely if $\mathrm{Re}(s)>1$, and continues meromorphically over the entire complex plane. Hence it suffices to show that
    \begin{eqnarray*}
        I(s)
        &=&
        \sum_{\lambda\vdash n} I_\lambda(s)
    \end{eqnarray*}
    under the assumption that $\mathrm{Re}(s)>1$ without further convergence issues.
    
    Further decomposing
    \begin{eqnarray*}
        I(s)
        &=&
        \sum_{m=1}^n I_m(s)
    \end{eqnarray*}
    by the dimension $m=\dim W$ where $W\subset V$ is the subspace spanned by $v^*,v^*X,v^*X^2,\dots$, it suffices to consider a summand
    \begin{eqnarray}
        \label{gl(n) singular integral summand m}
        I_m(s)
        &=&
        \int_{P_{[n-m,m]}(F)\backslash G(\mathbb{A})} \sum_P (-1)^{\ell(P)} \sideset{}{'} \sum_{\begin{subarray}{c}X\in\mathfrak{p}_{[n-m,m]}(F)\cap\mathfrak{p}(F)\\w^*\in(0^{n-m}\times F^m)'\cap(V^{U_P})'(F)\end{subarray}}\\
        &&\quad\times \int_{\mathfrak{u}_P(\mathbb{A})/\mathfrak{u}_P(F)} f\big((X+U)\cdot\mathrm{ad}(g)\big)\mathrm{d}U \Phi(w^*g)\big|\det(g)\big|^s \mathrm{d}g\nonumber
    \end{eqnarray}
    where the primed summation ranges over all $(X=\left[\begin{smallmatrix}
    X^\circ&Y\\0&X_1
    \end{smallmatrix}\right],w^*=[0,w_1^*])$ such that $(X_1,w_1^*)\in(\mathfrak{gl}_m(F)\times F^m{}')_\mathrm{reg}$.
    
    Since $(X_1,w_1^*)$ is regular, all parabolic subgroups $P$ appearing in (\ref{gl(n) singular integral summand m}) such that $(X,w^*)\in(\mathfrak{p}\times(V^{U_P})')(F)$ must contain $\mathrm{GL}(m)$ in the lower right corner and are hence either of the form
    \begin{eqnarray*}
        P^\circ_+
        &=&
        \left[\begin{matrix}
        P^\circ&\mathrm{Hom}(F^{n-m},F^m)\\
        \mathrm{Hom}(F^m,W_2)&\mathrm{GL}(m)
        \end{matrix}\right]
    \end{eqnarray*}
    with unipotent radical $U_{P_+^\circ}\simeq U_{P^\circ}\times\mathrm{Hom}(F^{n-m}/W_2,F^m)$ and partial flag length $\ell(P_+^\circ)=\ell(P^\circ)$, or of the form
    \begin{eqnarray*}
        P^\circ_-
        &=&
        \left[\begin{matrix}
        P^\circ&\mathrm{Hom}(F^{n-m},F^m)\\
        0&\mathrm{GL}(m)
        \end{matrix}\right]
    \end{eqnarray*}
    with unipotent radical $U_{P_-^\circ}\simeq U_{P^\circ}\times\mathrm{Hom}(F^{n-m},F^m)$ and partial flag length $\ell(P_-^\circ)=\ell(P^\circ)+1$, where $P^\circ\subset\mathrm{GL}(n-m)$ is a parabolic subgroup which stabilizes the subspace $W_2=(F^{n-m})^{U_{P^\circ}}\subset F^{n-m}$. Hence $I_m(s)$ is equal to
    \begin{eqnarray*}
        &&
        \int_{P_{[n-m,m]}(F)\backslash G(\mathbb{A})} \sum_{P^\circ\subset\mathrm{GL}(n-m)} (-1)^{\ell(P^\circ)} \sum_{\begin{subarray}{c}X^\circ\in\mathfrak{p}^\circ(F),Y\in\mathrm{Hom}(F^{n-m},F^m)\\(X_1,w_1^*)\in(\mathfrak{gl}_m(F)\times F^m{}')_\mathrm{reg}\end{subarray}}\\
        &&\quad\times \int_{\mathfrak{u}_{P^\circ}(\mathbb{A})/\mathfrak{u}_{P^\circ}(F)} \Big(\int f\big(\left[\begin{smallmatrix}
        X^\circ+U^\circ&Y+U_+\\0&X_1
        \end{smallmatrix}\right]\cdot\mathrm{ad}(g)\big) \mathrm{d}U_++\\
        &&\qquad -\int f\big(\left[\begin{smallmatrix}
        X^\circ+U^\circ&Y+U_-\\0&X_1
        \end{smallmatrix}\right]\cdot\mathrm{ad}(g)\big) \mathrm{d}U_-\Big) \mathrm{d}U^\circ\Phi([0,w_1^*]g)\big|\det(g)\big|^s \mathrm{d}g
    \end{eqnarray*}
    where $U_+$ is integrated over $\mathrm{Hom}(\mathbb{A}^{n-m},\mathbb{A}^m)/\mathrm{Hom}(F^{n-m},F^m)$ and $U_-$ is integrated over $\mathrm{Hom}(\mathbb{A}^{n-m}/W_2(\mathbb{A}),\mathbb{A}^m)/\mathrm{Hom}(F^{n-m}/W_2(F),F^m)$.
    
    Let $V\in\mathrm{Hom}(\mathbb{A}^{n-m},\mathbb{A}^m)$, then
    \begin{eqnarray*}
        &&
        \left[\begin{matrix}
        X^\circ+U^\circ&(Y+U_\pm)\\0&X_1
        \end{matrix}\right] \cdot\mathrm{ad}\Big(\left[\begin{matrix}1&V\\0&1\end{matrix}\right]\Big)\\
        &=&
        \left[\begin{matrix}
        X^\circ+U^\circ&(Y+U_\pm)+(X^\circ+U^\circ)V-VX_1\\0&X_1
        \end{matrix}\right],
    \end{eqnarray*}
    hence
    \begin{eqnarray*}
        &&
        \int_{\mathrm{Hom}(\mathbb{A}^{n-m},\mathbb{A}^m)/\mathrm{Hom}(F^{n-m},F^m)} \sum_{Y\in\mathrm{Hom}(F^{n-m},F^m)}\\
        &&\quad\times\Big(
        \int f\big(\left[\begin{smallmatrix}
        X^\circ+U^\circ&Y+U_+\\0&X_1
        \end{smallmatrix}\right]\cdot\mathrm{ad}(\left[\begin{smallmatrix}1&V\\0&1\end{smallmatrix}\right])\big) \mathrm{d}U_++\\
        &&\qquad-\int f\big(\left[\begin{smallmatrix}
        X^\circ+U^\circ&Y+U_-\\0&X_1
        \end{smallmatrix}\right]\cdot\mathrm{ad}(\left[\begin{smallmatrix}1&V\\0&1\end{smallmatrix}\right])\big) \mathrm{d}U_-\Big)\mathrm{d}V\\
        &=&
        \sum_{Y\in\mathrm{Hom}(W_2(F),F^m)_V} f_1(\left[\begin{smallmatrix}
        X^\circ+U^\circ&Y\\0&X_1
        \end{smallmatrix}\right]) -\int_{\mathrm{Hom}(W_2(\mathbb{A}),\mathbb{A}^m)_V} f_1(\left[\begin{smallmatrix}
        X^\circ+U^\circ&Y\\0&X_1
        \end{smallmatrix}\right])\mathrm{d}Y
    \end{eqnarray*}
    where $\mathrm{Hom}(W_2,F^m)_V$ denotes the quotient of $\mathrm{Hom}(W_2,F^m)$ consisting of $\mathrm{ad}(\left[\begin{smallmatrix}1&V\\0&1\end{smallmatrix}\right])$-coinvariants and $f_1$ denotes the fiberwise integral of $f$ along the fibers of the composite map
    \begin{eqnarray*}
        \begin{tikzcd}
        \mathrm{Hom}(\mathbb{A}^{n-m},\mathbb{A}^m) \arrow{r} \arrow{ddr} & \mathrm{Hom}(\mathbb{A}^{n-m},\mathbb{A}^m)\Big/\mathrm{Hom}(\mathbb{A}^{n-m}/W_2(\mathbb{A}),\mathbb{A}^m) \arrow[d,equal,"\rotatebox{90}{\(\sim\)}"]\\
        & \mathrm{Hom}(W_2(\mathbb{A}),\mathbb{A}^m) \arrow{d}\\
        & \mathrm{Hom}(W_2(\mathbb{A}),\mathbb{A}^m)_V,
        \end{tikzcd}
    \end{eqnarray*}
    which is equal to
    \begin{eqnarray*}
        \sum_{\begin{subarray}{c}Y^*\in\mathrm{Hom}(F^m,W_2(F))\\X_1Y^*=Y^*X^\circ|_{W_2}\end{subarray}} \widetilde{f}(\left[\begin{smallmatrix}
        X^\circ+U^\circ&0\\0&X_1
        \end{smallmatrix}\right],Y^*) -\widetilde{f}(\left[\begin{smallmatrix}
        X^\circ+U^\circ&0\\0&X_1
        \end{smallmatrix}\right],0)
    \end{eqnarray*}
    by Poisson summation where $\widetilde{f}$ denotes the partial Fourier transform of $f_1$ defined by
    \begin{eqnarray*}
        \widetilde{f}(\left[\begin{smallmatrix}
        X^\circ+U^\circ&0\\0&X_1
        \end{smallmatrix}\right],Y^*)
        &=&
        \int_{\mathrm{Hom}(W_2(\mathbb{A}),\mathbb{A}^m)_V} f_1(\left[\begin{smallmatrix}
        X^\circ+U^\circ&Y\\0&X_1
        \end{smallmatrix}\right]) \overline{\psi}\big(\mathrm{tr}(Y^*Y)\big)\mathrm{d}Y
    \end{eqnarray*}
    where $Y^*\in\mathrm{Hom}(\mathbb{A}^m,W_2(\mathbb{A}))^V$ intertwines the actions of $X_1$ and $X^\circ$.
    
    Hence by the partial Iwasawa decomposition $g=\left[\begin{smallmatrix}1&V\\0&1\end{smallmatrix}\right]\left[\begin{smallmatrix}g^\circ&0\\0&g_1\end{smallmatrix}\right]k$ where $V\in\mathrm{Hom}(\mathbb{A}^{n-m},\mathbb{A}^m)/\mathrm{Hom}(F^{n-m},F^m)$, $g^\circ\in\mathrm{GL}_{n-m}(F)\backslash\mathrm{GL}_{n-m}(\mathbb{A})$, and $g_1\in\mathrm{GL}_m(F)\backslash\mathrm{GL}_m(\mathbb{A})$, $k\in K$, the integral $I_m(s)$ is equal to
    \begin{eqnarray*}
        &&
        \int_{\mathrm{GL}_m(F)\backslash\mathrm{GL}_m(\mathbb{A})}  \sum_{(X_1,w_1^*)\in(\mathfrak{gl}_m(F)\times F^m{}')_\mathrm{reg}} \int_K\\
        &&\quad\times\Big( \int_{\mathrm{GL}_{n-m}(F)\backslash\mathrm{GL}_{n-m}(\mathbb{A})} \sum_{P^\circ\subset\mathrm{GL}(n-m)} (-1)^{\ell(P^\circ)} \sum_{X^\circ\in\mathfrak{p}^\circ(F)} \int_{\mathfrak{u}_{P^\circ}(\mathbb{A})/\mathfrak{u}_{P^\circ}(F)}\\
        &&\qquad\times \sum_{\begin{subarray}{c}w_2^*\in W_2(F)\\w_2^*\neq0\end{subarray}} \widetilde{f^k}(\left[\begin{smallmatrix}
        (X^\circ+U^\circ)\cdot\mathrm{ad}(g^\circ)&0\\0&X_1\cdot\mathrm{ad}(g_1)
        \end{smallmatrix}\right],g_1^{-1}Y^*g^\circ) \frac{|\det(g^\circ)|^m}{|\det(g_1)|^{n-m}} \times\\
        &&\quad\qquad\times \mathrm{d}U^\circ\big|\det(g^\circ)\big|^s\mathrm{d}g^\circ\Big)\Phi([0,w_1^*g_1]k)\mathrm{d}k \big|\det(g_1)\big|^s \frac{|\det(g_1)|^{n-m}}{|\det(g^\circ)|^m}\mathrm{d}g_1,
    \end{eqnarray*}
    where $Y^*$ denotes the unique $F[X]$-linear map from $F^m$ to $W_2(F)$ such that $w_2^*=w_1^*Y^*$, which is equal to
    \begin{eqnarray*}
        &&
        \int_{\mathrm{GL}_m(F)\backslash\mathrm{GL}_m(\mathbb{A})}  \sum_{(X_1,w_1^*)\in(\mathfrak{gl}_m(F)\times F^m{}')_\mathrm{reg}} \Big(\sum_{\lambda^\circ\vdash(n-m)} \int_{M_{\lambda^\circ}(F)\backslash M_{\lambda^\circ}(\mathbb{A})}\\
        &&\quad\times \sideset{}{'} \sum_{\begin{subarray}{c}X^\circ\in\mathfrak{m}_{\lambda^\circ}(F)\\w_i^*\in W_i'(F)\end{subarray}} (f,\Phi)_{P_{[\lambda^\circ,m]}}(\left[\begin{smallmatrix}
        X^\circ\cdot\mathrm{ad}(h^\circ)&0\\0&X_1\cdot\mathrm{ad}(g_1)
        \end{smallmatrix}\right],w_r^*h^\circ,\dots,w_1^*g_1)\times\\
        &&\qquad\times \big|\det(h^\circ)\big|^s \mathrm{d}h^\circ\Big) \big|\det(g_1)\big|^s \mathrm{d}g_1\\
        &=&
        \sum_{\begin{subarray}{c}
        \lambda\vdash n\\\lambda=[\lambda^\circ,m]\end{subarray}} I_\lambda(s)
    \end{eqnarray*}
    by induction on $n$.
\end{proof}

\begin{proposition}
     \label{gl(n) singular orbital integral proposition}
     If $\mathfrak{o}_\lambda$ has invariant factors $p_r\mid p_{r-1}\mid\dots\mid p_1$ where $p_j(t)=\prod_iq_i(t)^{m_{ij}}$ and $q_i(t)\in F[t]$ are all distinct and irreducible, then the regularized mirabolic integral $I_{\mathfrak{o}_\lambda}(s)$ is the product of
    \begin{eqnarray*}
        \prod_{i,j}\prod_{k=1}^{m_{ij}} \zeta_{E_i}\big(h_{\lambda_i}(j,k)s-m_{ij}+k\big)
    \end{eqnarray*}
    by an entire function in $s$, where $E_i\simeq F[t]/(q_i)$ and
    \begin{eqnarray*}
        h_{\lambda_i}(j,k)
        &=&
        m_{ij}-k+\big|\{l\geq j\mid m_{il}\geq k\}\big|
    \end{eqnarray*}
    where $|S|$ denotes the cardinality of a finite set $S$.
\end{proposition}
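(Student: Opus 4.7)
The plan is to combine a primary decomposition with induction on the number of parts of $\lambda$. First I would reduce to a single prime. Writing $p_j = \prod_i q_i^{m_{ij}}$, the $F[X]$-module structure on $V = F^n$ decomposes it into $q_i$-primary components, and because the intertwiners $U_i^* : W_{i-1} \to W_i$ appearing in $(f,\Phi)_{P_\lambda}$ commute with $X$, they split along primary components as well. Consequently $I_{\mathfrak{o}_\lambda}(s)$ factors as a product indexed by $i$, and it suffices to treat the case of a single prime $q$ of degree $d$ with $E \simeq F[t]/(q)$ and $p_j = q^{m_j}$ where $m_j = m_{1j}$. The centralizer identification used in Proposition \ref{gl(n) regular orbital integral proposition} then converts each companion block $X_{q^{m_j}}$ into a nilpotent Jordan block $J_{m_j}$ acting on $E^{m_j}$, reducing the problem to a nilpotent setup over $E$ for the partition $\lambda_1 = (m_1, \dots, m_r)$.

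Next I would induct on $r$, with base case $r = 1$ given by Proposition \ref{gl(n) regular orbital integral proposition}. For $r \geq 2$, since each block is regular and the multiset of block characteristic polynomials must match the invariant factors, the $M_\lambda(F)$-orbit of the canonical regular tuple $(X_0, w_r^*, \dots, w_1^*)$ is unique with trivial stabilizer, so $I_{\mathfrak{o}_\lambda}(s)$ unfolds to an integral over $M_\lambda(\mathbb{A}) = \mathrm{GL}_{m_r}(\mathbb{A}_E) \times M_{\lambda^\circ}(\mathbb{A}_E)$ with $\lambda^\circ = [n_{r-1}, \dots, n_1]$. Integrating out the smallest factor $h_r$ first via an Iwasawa decomposition analogous to the one in Lemma \ref{gl(n) regular orbital integral lemma}, one extracts the last-row zeta factors $\prod_{k=1}^{m_r} \zeta_E(ks - k + 1)$, which after the reindexing $k \mapsto m_r - k + 1$ equals $\prod_{k=1}^{m_r} \zeta_E(h_{\lambda_1}(r,k) s - m_r + k)$ since $h_{\lambda_1}(r,k) = m_r - k + 1$. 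The residual integral over $h^\circ$ should then be recognizable as $I_{\mathfrak{o}_{\lambda^\circ}}(s)$ for a modified Schwartz pair $(f', \Phi')$, where the modification shifts the zeta argument at each box $(j, k)$ of $\lambda^\circ$ by exactly $+s$ whenever $k \leq m_r$, matching the hook length increment $h_{\lambda_1}(j,k) - h_{\lambda_1^\circ}(j,k) = \mathbbm{1}[k \leq m_r]$. Applying the induction hypothesis to $\lambda^\circ$ then yields the claimed product over all boxes of $\lambda_1$.

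The main obstacle is the bookkeeping in the inductive step: because the hook length shift is localized to the first $m_r$ columns of $\lambda^\circ$ rather than being a global twist of $|\det h^\circ|$, the Iwasawa decomposition of $h^\circ$ must be organized to distinguish the columns of the Young diagram individually. The requisite column-wise shift should arise from the joint effect of the mixed Haar measure on $M_\lambda \subset P_\lambda$ and the dependence of the coupling variable $U_r^* : W_{r-1} \to W_r$ on both $h_{r-1}$ and $h_r$, which upon integrating out $h_r$ redistributes a factor of $|\cdot|^s$ onto the columns of $M_{\lambda^\circ}$ indexed by $k \leq m_r$. Making this precise will likely require setting up coordinates on the centralizer of $J_{m_j}$ adapted to the partition structure — essentially running the recursion of Lemma \ref{gl(n) regular orbital integral lemma} against columns of the Young diagram rather than rows of the flag — after which the identity $h_{\lambda_1}(j,k) = h_{\lambda_1^\circ}(j,k) + \mathbbm{1}[k \leq m_r]$ closes the induction.
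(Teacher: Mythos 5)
Your two preliminary reductions coincide with the paper's: Proposition \ref{gl(n) singular orbital integral proposition} is proved there by first reducing, exactly as in Proposition \ref{gl(n) regular orbital integral proposition}, to a single irreducible factor $q$ and then base-changing along the centralizer identification so that each companion block becomes a Jordan block over $E$; the evaluation is then carried out in the lower-triangular Borel coordinates of Lemma \ref{gl(n) regular orbital integral lemma}. Where you diverge is the direction of the induction: the paper peels off the coordinates $b_{1,k}$ of the \emph{largest} block $W_1$ (the one coupled to $\Phi$) and absorbs the contribution of every lower row reaching column $l$ into the exponent via the induced action $b_{1,k}^\circ$ on $W_2$, producing $\zeta(h_\lambda(1,l)s-n_1+l)$ box by box along the first row; you instead peel off the \emph{smallest} block $W_r$ (coupled only to the Fourier-dual intertwiner $U_r^*$) and push the shift onto the surviving rows. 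Your combinatorial bookkeeping is correct on both ends: $h_{\lambda_1}(r,k)=m_r-k+1$ and $h_{\lambda_1}(j,k)-h_{\lambda_1^\circ}(j,k)=\mathbbm{1}[k\leq m_r]$, and the two organizations are consistent with each other and with the $\mathfrak{gl}(2)$ computations in Section \ref{sec:2}.

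The gap is in how your induction closes. You assert that the residual integral is ``$I_{\mathfrak{o}_{\lambda^\circ}}(s)$ for a modified Schwartz pair $(f',\Phi')$'' whose effect is to shift the zeta argument by $+s$ at precisely the boxes with $k\leq m_r$. No choice of Schwartz pair can do this: the proposition applied to $(f',\Phi',\lambda^\circ)$ yields an entire multiple of the \emph{unshifted} product $\prod\zeta_E(h_{\lambda^\circ}(j,k)s-m_j+k)$, whatever $(f',\Phi')$ is, so the statement being inducted on cannot be the proposition itself. You need a strengthened inductive hypothesis in which the integrand carries an extra character $\prod|z_{j,k}|^{c_{j,k}s}$ in box-wise coordinates (one diagonal variable $z_{j,k}$ per box, as in the coordinates $b_{j,k}$ of the paper's proof), with conclusion $\prod\zeta_E\big((h_{\lambda^\circ}(j,k)+c_{j,k})s-m_j+k\big)$. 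Relatedly, the crux of the whole computation --- that integrating out $h_r$ simultaneously produces the clean factors $\prod_{k\leq m_r}\zeta_E(ks-k+1)$ from the Tate integral in $U_r^*$ \emph{and} deposits exactly one extra $|z_{j,k}|^{s}$ on each box of $\lambda^\circ$ in columns $k\leq m_r$ and nothing elsewhere --- is asserted rather than derived; it is not a formal consequence of the Haar-measure modulus on $P_\lambda$ but requires tracking how $U_r^*$ transforms under the Iwasawa coordinates of both $h_{r-1}$ and $h_r$. This is the mirror image of the step the paper handles (tersely) through the matrices $b_{1,k}^\circ$, so your route is viable, but as written the inductive step does not yet constitute a proof.
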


\begin{proof}
    By the same argument as in the proof of Proposition \ref{gl(n) regular orbital integral proposition}, it suffices to consider the special case when $f_j=(t-d)^{n_j}$ where $d\in F$ is the unique eigenvalue of $\mathfrak{o}_\lambda$, and proceed as in the proof of Lemma \ref{gl(n) regular orbital integral lemma}.
    
    Choosing the representative
    \begin{eqnarray*}
        \bigg(X=\left[\begin{matrix}
        X_r&0&\cdots&0\\
        0&X_{r-1}&\cdots&0\\
        \vdots&\vdots&\ddots&\vdots\\
        0&0&\cdots&X_1
        \end{matrix}\right], w_r^*=e_{n_r}^*,\dots, w_1^*=e_{n_1}^*\bigg)
    \end{eqnarray*}
    where $X_j=\left[\begin{smallmatrix}
    d&0&\dots\\
    1&d&\dots\\
    \dots&\dots&\dots
    \end{smallmatrix}\right]\in\mathrm{GL}_{n_j}(F)$ is a Jordan block with ones along the subdiagonal and $e_{n_j}^*=[0,\dots,0,1]\in W_j'(F)\simeq F^{n_j}{}'$, it suffices to evaluate the analogous integral
    \begin{eqnarray*}
        I_{\mathfrak{o}_\lambda,B^-}(s)
        &=&
        \int_{B_{n_1}^-(\mathbb{A})}\dots\int_{B_{n_r}^-(\mathbb{A})}\\
        &&\quad\times
        (f,\Phi)_{P_\lambda}\big(\left[\begin{smallmatrix}
        X_r\cdot\mathrm{ad}(b_r)&0&\dots\\
        0&\dots&0\\
        \dots&0&X_1\cdot\mathrm{ad}(b_1)
        \end{smallmatrix}\right],w_r^*b_r,\dots,w_1^*b_1\big)\times\\
        &&\qquad\times\prod_{j=1}^r\big|\det(b_j)\big|^s \mathrm{d}b_j
    \end{eqnarray*}
    integrated over $B_{n_j}^-(\mathbb{A})\subset\mathrm{GL}_{n_j}(\mathbb{A})$.
    
    Choosing coordinates
    \begin{eqnarray*}
        b_j
        &=&
        b_{j,n_j}\left[\begin{matrix}
        b_{j,n_j-1}&0\\0&1
        \end{matrix}\right]\dots\left[\begin{matrix}
        b_{j,2}&0\\0&I_{n_j-2}
        \end{matrix}\right]\left[\begin{matrix}
        b_{j,1}&0\\0&I_{n_j-1}
        \end{matrix}\right]
    \end{eqnarray*}
    as in the proof of Lemma \ref{gl(n) regular orbital integral lemma} where each $b_{j,k}\in B_k^-(\mathbb{A})$ is of the form
    \begin{eqnarray*}
        b_{j,k}
        &=&
        \left[\begin{matrix}
        z&0&\cdots&0&0&0\\
        u_{k-1}&z&\cdots&0&0&0\\
        u_{k-2}&u_{k-1}&\ddots&0&0&0\\
        \vdots&\ddots&\ddots&\ddots&\vdots&\vdots\\
        u_2&u_3&\ddots&u_{k-1}&z&0\\
        u_1&u_2&\cdots&u_{k-2}&u_{k-1}&z
        \end{matrix}\right]
    \end{eqnarray*}
    where $z\in\mathbb{A}^\times$, $u_1,\dots,u_{k-1}\in\mathbb{A}$ and $I_l$ denotes the $l\times l$ identity matrix, then by a similar computation, the $\mathrm{d}b_{1,k}$ integral produces a Tate integral of the form
    \begin{eqnarray}
        \label{gl(n) lambda tate integral}
        &&\int\!\!\!...\!\!\!\int_\mathbb{A}\int_{\mathbb{A}^\times} \int(f,\Phi)_{P_\lambda}\big(\dots,X_2\cdot\mathrm{ad}(b_2b_{1,k}^\circ),X_{1,k}\cdot\mathrm{ad}(b_{1,k}),\dots\big)\times\nonumber\\
        &&\quad\times\big|\det(b_2b_{1,k}^\circ)\big|^s\big|z\big|^{ks-k+1}\mathrm{d}^\times z\prod_{i=1}^{k-1}\mathrm{d}v_i
    \end{eqnarray}
    where $b_{1,k}^\circ\in\mathrm{GL}_{n_2}(\mathbb{A})$ denotes the action of $b_{1,k}$ on $W_2(\mathbb{A})$ induced from the unique $\mathbb{A}[X]$-linear map from $W_1(\mathbb{A})$ to $W_2(\mathbb{A})$ which maps $e_{n_1}^*$ to $e_{n_2}^*$. There are two cases:
    \begin{itemize}
        \item if $k\leq n_1-n_2$, then $b_{1,k}^\circ=I_{n_2}$ and the Tate integral (\ref{gl(n) lambda tate integral}) is an entire multiple of \begin{eqnarray*}
            \zeta_F(ks-k+1)
            &=&
            \zeta_F\big(h_\lambda(1,l)s-n_1+l\big)
        \end{eqnarray*}
        where $l=n_1-k+1$;
        \item if $k>n_1-n_2$, then
        \begin{eqnarray*}
            b_{1,k}^\circ
            &=&
            \left[\begin{matrix}
            z&\cdots&0&0\\
            \vdots&\ddots&\vdots&\vdots\\
            u_{n_1-n_2+1}&\dots&z&0\\
            0&\cdots&0&I_{n_1-k}
            \end{matrix}\right]
        \end{eqnarray*}
        and the Tate integral (\ref{gl(n) lambda tate integral}) is an entire multiple of
        \begin{eqnarray*}
            \zeta_F\big(h_\lambda(2,l)s+ks-k+1\big)
            &=&
            \zeta_F\big(h_\lambda(1,l)s-n_1+l\big)
        \end{eqnarray*}
        by induction, where $l=n_1-k+1$.
    \end{itemize}
\end{proof}

\begin{theorem}[Mirabolic trace formula for $\mathfrak{gl}(n)$]
    \label{mirabolic trace formula}
    Let $\Phi\in\mathcal{S}(V(\mathbb{A}))$, then the distribution $I(s)\in\mathcal{S}'(\mathfrak{g}(\mathbb{A}))$ defined by
    \begin{eqnarray*}
        I(s)
        &=&
        \sum_{\lambda\vdash n} \sum_{\mathfrak{o}_\lambda} I_{\mathfrak{o}_\lambda}(s)
    \end{eqnarray*}
    where the summation $\sum_{\mathfrak{o}_\lambda}$ ranges over all conjugacy classes $\mathfrak{o}_\lambda\subset\mathfrak{g}(F)$ which have Frobenius normal form of partition type $\lambda$
    \begin{itemize}
        \item converges absolutely if $\mathrm{Re}(s)>1$ and continues meromorphically to a sum of entire multiples of zeta functions, and
        \item is invariant under the Fourier transform $f\leftrightarrow\widehat{f}$ on $\mathcal{S}(\mathfrak{g}(\mathbb{A}))$.
    \end{itemize}
\end{theorem}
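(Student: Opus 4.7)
The first bullet is essentially already in hand. Proposition \ref{gl(n) singular integral proposition} gives absolute convergence of $\sum_{\lambda\vdash n}\sum_{\mathfrak{o}_\lambda} I_{\mathfrak{o}_\lambda}(s)$ on $\mathrm{Re}(s)>1$ and meromorphic continuation to $\mathbb{C}$, and Proposition \ref{gl(n) singular orbital integral proposition} further represents each $I_{\mathfrak{o}_\lambda}(s)$ as an entire multiple of $\prod_{i,j}\prod_{k=1}^{m_{ij}}\zeta_{E_i}\bigl(h_{\lambda_i}(j,k)s-m_{ij}+k\bigr)$ for extensions $E_i/F$ of degree at most $n$. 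So only the Fourier invariance $I(s;f)=I(s;\widehat{f})$ needs genuine work.

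I would prove the Fourier invariance by induction on $n$. For the base case $n=1$, the only partition is $[1]$, every orbit is a singleton $\{a\}$ with $a\in F$, the parabolic descent is trivial, and one checks directly that $I_{\{a\}}(s)=f(a)\cdot Z_\Phi(s)$ where $Z_\Phi(s)$ is the Tate zeta integral of $\Phi$; the claim then collapses to the Poisson summation $\sum_{a\in F} f(a)=\sum_{a\in F}\widehat{f}(a)$ on $\mathcal{S}(\mathbb{A})$. For the inductive step I would replay the argument of Proposition \ref{gl(n) singular integral proposition} for $f$ and for $\widehat{f}$ separately and match the two outputs. Recall that that argument decomposes $I(s)=\sum_{m=1}^n I_m(s)$ according to the dimension $m$ of the cyclic subspace spanned by $v^*,v^*X,v^*X^2,\ldots$, and then rearranges each $I_m(s)$ into $\sum_{\lambda=[\lambda^\circ,m]} I_\lambda(s)$ by a Poisson summation on the off-diagonal block $\mathrm{Hom}(F^{n-m},F^m)$ which collapses the alternating sum over $P^\circ_+$ versus $P^\circ_-$ into the difference between a sum over $Y^*\in\mathrm{Hom}(F^m,W_2(F))$ intertwining $X_1$ with $X^\circ$ and the unrestricted $Y$-integral. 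After this rearrangement, the inner part of $I_m(s)$ carrying the dependence on $X^\circ$ has the shape of the regularized mirabolic integral $I(s)$ for $\mathrm{GL}(n-m)$ applied to the partial Fourier transform $\widetilde{f^k}(-,Y^*)$ together with an auxiliary Schwartz function on $W_2(\mathbb{A})$ built from $(X_1,w_1^*,Y^*)$; by the inductive hypothesis on $\mathrm{GL}(n-m)$ this inner integral is invariant under replacing $\widetilde{f^k}(-,Y^*)$ by its Fourier transform in the $\mathfrak{gl}(n-m)$ variable.

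The main obstacle, and the real content of this approach, is the bookkeeping that identifies the full Fourier transform $f\leftrightarrow\widehat{f}$ on $\mathfrak{gl}_n(\mathbb{A})$, when restricted through the block decomposition relative to the maximal parabolic $P_{[n-m,m]}$, with the simultaneous combination of: (i) the Fourier transform on the $\mathfrak{gl}(n-m)$ block, to which the inductive hypothesis applies; (ii) the Fourier transform on the $\mathfrak{gl}(m)$ block, which is essentially trivial because $(X_1,w_1^*)$ is regular and may be handled by the Fubini-type calculation already used in the proof of Proposition \ref{gl(n) regular integral proposition}; and (iii) the Fourier transforms on the two off-diagonal $\mathrm{Hom}$ blocks, which is exactly the Poisson summation step already used in the proof of Proposition \ref{gl(n) singular integral proposition}. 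Organizing this compatibility using the commuting diagrams from Section \ref{sec:2} for parabolic descent and for Fourier transform with the adjoint action, all three pieces line up and yield $I_m(s;f)=I_m(s;\widehat{f})$ for each $m$; summing over $m$ closes the induction and gives the theorem.
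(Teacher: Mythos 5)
Your treatment of the first bullet is fine: it is exactly the combination of Propositions \ref{gl(n) singular integral proposition} and \ref{gl(n) singular orbital integral proposition}. For the Fourier invariance, however, there is a genuine gap. The paper's own proof is a one-line appeal to the alternative expression of $I(s)$ as the alternating sum $\sum_P(-1)^{\ell(P)}K_P(g;f)\sum_{v^*}\Phi(v^*g)|\det(g)|^s$ together with $K_P(g;f)=K_P(g;\widehat{f})$; as the note to the reader concedes, the sum over all parabolic subgroups diverges, so that route is not available as written. Your plan is precisely the author's proposed ``second approach'' (induction on $n$ by rearranging the ingredients of Proposition \ref{gl(n) singular integral proposition}), which is the right instinct, but you have only restated the plan: the ``bookkeeping'' identifying the global Fourier transform with the block-wise transforms relative to $P_{[n-m,m]}$ is exactly the open step, and asserting that ``all three pieces line up'' is not a proof. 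Note also that the rearrangement in Proposition \ref{gl(n) singular integral proposition} passes through the divergent alternating sum, so to make your induction legitimate you must reorganize it so that every intermediate expression is absolutely convergent; this is not automatic.

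Moreover, the endgame of your induction is false as stated: $I_m(s;f)=I_m(s;\widehat{f})$ cannot hold for each $m$ separately, because the stratification by the dimension $m$ of the cyclic subspace spanned by $v^*,v^*X,\dots$ is not preserved by the Fourier transform on $\mathcal{S}(\mathfrak{g}(\mathbb{A}))$. Concretely, for $n=2$ take $f=\otimes_vf_v$ with $f_{v_0}$ supported in the regular elliptic set at a single place $v_0$; then only elliptic conjugacy classes contribute, so $I_1(s;f)=0$ (the $m=1$ stratum collects the central classes $\lambda=[1,1]$), while $\widehat{f}$ satisfies no such support condition and $I_1(s;\widehat{f})$ is in general nonzero --- this is visible in the explicit $\mathfrak{gl}(2)$ formula for $I_{\mathfrak{o}_z}(s)$, which involves $\widetilde{f^k}(d,c^*)$ for $c^*\neq0$ and does not vanish for generic input. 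Hence if the total $I(s)$ is Fourier-invariant, the individual strata $I_m(s)$ cannot be. Only the full sum over $m$ (equivalently over $\lambda$) can be invariant, and establishing that requires tracking how Poisson summation in the $\mathfrak{gl}(m)$ block and in the off-diagonal blocks transfers mass between different values of $m$ and different partitions $\lambda$; that transfer is the actual content of the theorem and is what your sketch omits.
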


\begin{proof}
    It remains to establish the invariance under the Fourier transform, which follows from the alternative definition of $I(s)$ as the alternating sum
    \begin{eqnarray*}
        \int_{G(F)\backslash G(\mathbb{A})} \sum_P(-1)^{\ell(P)} K_P(g;f) \sum_{v^*\in(V^{U_P})'(F)} \Phi(v^*g)\big|\det(g)\big|^s \mathrm{d}g,
    \end{eqnarray*}
    together with the observation that $K_P(g;f)=K_P(g;\widehat{f})$.
\end{proof}

\begin{remark}
    Let $\mathfrak{o}\subset\mathfrak{g}(F)$ be a conjugacy class with characteristic polynomial $p(t)=\prod_iq_i(t)^{m_i}$ and invariant factors $p_r\mid\dots\mid p_1$ where $p_j(t)=\prod_iq_i(t)^{m_{ij}}$ and $q_i(t)\in F[t]$ are all distinct and irreducible, then each irreducible factor $q_i$ determines a partition $\lambda_i=[\dots,m_{ij},\dots,m_{i1}]\vdash m_i$. In particular
    \begin{itemize}
        \item $\mathfrak{o}=\mathfrak{o}_\mathrm{reg}$ is regular if and only if all $\lambda_i=[m_i]$ with Young diagram
        \begin{eqnarray*}
            \begin{ytableau}
            ~&~&\none[\cdots]&~&~
            \end{ytableau}~;
        \end{eqnarray*}
        \item $\mathfrak{o}=\mathfrak{o}_\mathrm{ss}$ is semisimple if and only if all $\lambda_i=[1,\dots,1]$ with Young diagram
        \begin{eqnarray*}
            \begin{ytableau}
            ~\\~\\\none[\vdots]\\~\\~
            \end{ytableau}~;
        \end{eqnarray*}
        \item $\mathfrak{o}=\mathfrak{o}_\mathrm{rs}$ is regular semisimple if and only if all $m_i=1$ and $\lambda_i=[1]$ with Young diagram
        \begin{eqnarray*}
            \begin{ytableau}
            ~
            \end{ytableau}~.
        \end{eqnarray*}
        \end{itemize}
        In general, the regularized mirabolic integral satisfies the functional equation
        \begin{eqnarray*}
        I(s)
        &=&
        I(1-s)
        \end{eqnarray*}
        under the transformations $f\leftrightarrow f^\intercal$ and $\Phi\leftrightarrow\widehat{\Phi}$, which interchanges each partition $\lambda_i\vdash m_i$ with the conjugate partition $\overline{\lambda_i}\vdash m_i$ which is obtained by reflecting the Young diagram along the main diagonal.

        More precisely, Proposition \ref{gl(n) singular orbital integral proposition} could be reformulated in terms of Young diagrams which states that the regularized mirabolic integral $I_{\mathfrak{o}}(s)$ is an entire multiple of
        \begin{eqnarray*}
        \prod_i\prod_{j,k} \zeta_{E_i}\big(h_{\lambda_i}(j,k)s-a_{\lambda_i}(j,k)\big),
        \end{eqnarray*}
        where $E_i\simeq F[t]/(q_i)$ and $h_\lambda(j,k),a_\lambda(j,k)$ denote the hook length and arm length of the $(j,k)$th box of the Young diagram of a partition $\lambda\vdash m$
        \begin{eqnarray*}
            \begin{ytableau}
            \none&\none&\none[k]\\
            \none&~&~&~&~&~&~\\
            \none[j]&~&*(red)\bullet&*(red)~&*(red)~&*(red)~\\
            \none&~&*(red)~&~&~\\
            \none&~&*(red)~&~
            \end{ytableau}
            &&
            \begin{ytableau}
            \none&\none&\none[k]\\
            \none&~&~&~&~&~&~\\
            \none[j]&~&\bullet&*(blue)~&*(blue)~&*(blue)~\\
            \none&~&~&~&~\\
            \none&~&~&~
            \end{ytableau}\\
            \textrm{the~hook~length~}h_\lambda(j,k)
            &\textrm{and}&
            \textrm{the~arm~length~}a_\lambda(j,k).
        \end{eqnarray*}
\end{remark}

\appendix
\section{Appendix: The local theory}
\label{appendix}

\paragraph{Local preliminaries}
For this appendix let $F_v$ be a non-archimedean local field and let $X_v$ denote the totally disconnected locally compact space $X(F_v)$ consisting of the $F_v$-valued points of an algebraic variety $X$.

Let $f\in\mathcal{S}(\mathfrak{g}_v)$, then by the Weyl integration formula
\begin{eqnarray*}
    \int_{\mathfrak{g}_v} f(X)\mathrm{d}X
    &=&
    \sum_{\lambda\vdash n} \sum_{E_v^\times\subset M_{\lambda,v}} \frac{1}{|W(M_{\lambda,v},E_v^\times)|} \int_{E_{\mathrm{reg},v}} \int_{E_v^\times\backslash G_v} f\big(X\cdot\mathrm{ad}(g)\big) \mathrm{d}g \big|\Delta(X)\big|_v\mathrm{d}X
\end{eqnarray*}
where the summation $\sum_{E_v^\times}$ ranges over all conjugacy classes of maximal tori $E_v^\times\subset M_{\lambda,v}$ elliptic modulo the center of $M_{\lambda,v}$, $E_v=\mathfrak{e}_v^\times\subset\mathfrak{g}_v$ denotes the Lie algebra of $E_v^\times$ and $E_{\mathrm{reg},v}\subset E_v$ denotes the regular locus defined by $\Delta(X)\neq0$ where $\Delta(X)$ denotes the discriminant of the characteristic polynomial of $X$, and $W(M_{\lambda,v},E_v^\times)$ denotes the relative Weyl group of $E_v^\times$ with respect to $M_{\lambda,v}$.

Fix a non-trivial additive character $\psi_v:F_v\to\mathbb{C}^\times$. Let $\langle\cdot,\cdot\rangle_v$ be a non-degenerate bilinear form on a vector space $W_v$ equipped with the self-dual Haar measure $\mathrm{d}w$. For all $f\in\mathcal{S}(W_v)$, define its Fourier transform by
\begin{eqnarray*}
    \widehat{f}(v)
    &=&
    \int_{W_v} f(w)\overline{\psi_v}(\langle v,w\rangle_v)\mathrm{d}w.
\end{eqnarray*}
Then the Fourier transform satisfies the inversion formula
\begin{eqnarray*}
    \widehat{\widehat{f_1}}(w)
    &=&
    f_1(-w)
\end{eqnarray*}
and the Plancherel formula
\begin{eqnarray*}
    \int_{W_v} \widehat{f_1}(w)f_2(w) \mathrm{d}w
    &=&
    \int_{W_v} f_1(w)\widehat{f_2}(w) \mathrm{d}w
\end{eqnarray*}
for all $f_1,f_2\in\mathcal{S}(W_v)$.

\paragraph{The local trace formula of Waldspurger}
Let $f_1,f_2\in\mathcal{S}(\mathfrak{g}_v)$, define the local kernel function
\begin{eqnarray*}
    K_v(g;f_1,f_2)
    &=&
    \int_{\mathfrak{g}_v} f_1(X)f_2\big(X\cdot\mathrm{ad}(g)\big) \mathrm{d}X
\end{eqnarray*}
for $g\in G_v$, then
\begin{eqnarray*}
    K_v(g;\widehat{f_1},f_2)
    &=&
    K_v(g;f_1,\widehat{f_2})
\end{eqnarray*}
by the Plancherel formula.

Applying the results of Arthur \cite{A91}, Waldspurger \cite{W95} has determined the asymptotic behavior of the divergent integral
\begin{eqnarray*}
    \int_{Z_v\backslash G_v} K_v(g;f_1,f_2) \mathrm{d}g
\end{eqnarray*}
over an increasing system of compact subsets of $Z_v\backslash G_v$. The resultant local trace formula is an identity of the form
\begin{eqnarray*}
    J(\widehat{f_1},f_2)
    &=&
    J(f_1,\widehat{f_2})
\end{eqnarray*}
between absolutely convergent integrals defined by
\begin{eqnarray*}
    J(f_1,f_2)
    &=&
    \sum_{\lambda\vdash n} \sum_{E_v^\times\subset M_{\lambda,v}} \frac{(-1)^{\ell(P_\lambda)}}{|W(M_{\lambda,v},E_v^\times)|} \int_{E_{\mathrm{reg},v}} J_{M_\lambda,v}(X;f_1,f_2) \mathrm{d}X
\end{eqnarray*}
where the distributions $J_{M_\lambda,v}(X;f_1,f_2)$ are combinations of weighted orbital integrals $J_{M_\lambda,v}^{M_{P_1}}\big(X;(f_1)_{P_1,v}\big)$ and $J_{M_\lambda,v}^{M_{P_2}}\big(X;(f_2)_{P_2,v}\big)$ of the parabolic descent of $f_1$ and $f_2$ along all standard parabolic subgroups $M_\lambda\subset P_1,P_2\subset G$.
\begin{itemize}
    \item If $\lambda=[n]$ and $E_v^\times\subset G_v$ is elliptic modulo $Z_v$ where $E_v/F_v$ is a field extension of degree $n$, then
    \begin{eqnarray*}
        J_{G,v}(X;f_1,f_2)
        &=&
        |\Delta(X)|_v \int_{E_v^\times\backslash G_v} f_1\big(X\cdot\mathrm{ad}(g_1)\big) \mathrm{d}g_1 \int_{E_v^\times\backslash G_v} f_2\big(X\cdot\mathrm{ad}(g_2)\big) \mathrm{d}g_2\\
        &=&
        I_{G,v}^G(X;f_1)I_{G,v}^G(X;f_2)
    \end{eqnarray*}
    where
    \begin{eqnarray*}
        I_{G,v}^G(X;f)
        &=&
        |\Delta(X)|_v^{1/2} \int_{E_v^\times\backslash G_v} f\big(X\cdot\mathrm{ad}(g)\big) \mathrm{d}g
    \end{eqnarray*}
    denotes a normalized elliptic orbital integral.
    \item Waldspurger \cite{W95} has also introduced another version of the local trace formula
    \begin{eqnarray*}
        &&
        \sum_\lambda \sum_{E_v^\times} \frac{(-1)^{\ell(P_\lambda)}}{|W(M_{\lambda,v},E_v^\times)|} \int_{E_{\mathrm{reg},v}} I_{M_\lambda,v}^G(X;\widehat{f_1})I_{G,v}^G(X;f_2) \mathrm{d}X\\
        &=&
        \sum_\lambda \sum_{E_v^\times} \frac{(-1)^{\ell(P_\lambda)}}{|W(M_{\lambda,v},E_v^\times)|} \int_{E_{\mathrm{reg},v}} I_{G,v}^G(X;f_1)I_{M_\lambda,v}^G(X;\widehat{f_2}) \mathrm{d}X
    \end{eqnarray*}
    where $I_{M,v}^G(X;f)$ are conjugation invariant analogues of the weighted orbital integrals $J_{M,v}^G(X;f)$.
\end{itemize}

\paragraph{Local Tate integrals}
Let $\Phi\in\mathcal{S}(F_v)$, then the local Tate integral $I_v(s;\Phi)$ is a local integral of the form
\begin{eqnarray*}
    I_v(s;\Phi)
    &=&
    \int_{F_v^\times} \Phi(x) |x|_v^s\mathrm{d}^\times x
\end{eqnarray*}
for all $s\in\mathbb{C}$ such that the integral converges absolutely. In addition to the analytic properties of the global zeta functions, Tate \cite{T50} has also established the following local analytic properties of $I_v(s;\Phi)$:
\begin{itemize}
    \item the integral $I_v(s;\Phi)$ converges absolutely on the half-plane $\mathrm{Re}(s)>0$ and continuous to a meromorphic function on the entire complex plane, which will also be denoted by $I_v(s,\Phi)$;
    \item there exists a Laurent polynomial $p(t)\in\mathbb{C}[t,t^{-1}]$ such that
    \begin{eqnarray*}
        I_v(s;\Phi)
        &=&
        \frac{p(q_v^{-s})}{1-q_v^{-s}}
    \end{eqnarray*}
    where $q_v=|F_v/O_v|$ denotes the cardinality of the residue field of $F_v$;
    \item the meromorphic functions $I_v(s;\Phi_1)$ and $I_v(s;\Phi_2)$ satisfy the local functional equation
    \begin{eqnarray*}
        I_v(s;\Phi_1)I_v(1-s;\widehat{\Phi}_2)
        &=&
        I_v(1-s;\widehat{\Phi}_1)I_v(s;\Phi_2)
    \end{eqnarray*}
    for all $\Phi_1,\Phi_2\in\mathcal{S}(F_v)$, which follows from the Plancherel formula.
\end{itemize}
Parallel to the global Tate integrals, Tate \cite{T50} has also introduced local Tate integrals twisted by a local multiplicative character $\chi_v:F_v^\times\to\mathbb{C}^\times$, which will also be excluded from this paper for simplicity.

\begin{definition}
    Let $\Phi_1,\Phi_2\in\mathcal{S}(V_v)$, then the local mirabolic Eisenstein integral $E_v(g,s;\Phi_1,\Phi_2)$ is defined by
    \begin{eqnarray*}
        E_v(g,s;\Phi_1,\Phi_2)
        &=&
        |\det(g)|_v^s \int_{F_v^\times} \int_{V_v} \Phi_1(v^*) \Phi_2(zv^*g) \mathrm{d}v^*|z|_v^{ns}\mathrm{d}^\times z
    \end{eqnarray*}
    which converges absolutely for $\mathrm{Re}(s)>0$ and continues meromorphically to the entire complex plane, where $g\in Z_v\backslash G_v$.
\end{definition}

\begin{proposition}
     Let $f_1,f_2\in\mathcal{S}(\mathfrak{g}_v)$ and $\Phi_1,\Phi_2\in\mathcal{S}(V_v)$, then the integral
     \begin{eqnarray}
         \label{gl(n) local integral}
         &&
         \int_{Z_v\backslash G_v} K_v(g;f_1,f_2) E(g,s;\Phi_1,\Phi_2) \mathrm{d}g\\
         &=&
         \sum_{\lambda\vdash n} \sum_{E_v^\times\subset M_{\lambda,v}} \frac{1}{|W(M_{\lambda,v},E_v^\times)|} \int_{E_{\mathrm{reg},v}} I_v(X,s;f_1,f_2,\Phi_1,\Phi_2) \mathrm{d}X \nonumber
     \end{eqnarray}
     where
     \begin{eqnarray*}
         I_v(X,s;f_1,f_2,\Phi_1,\Phi_2)
         &=&
         \int_{E_v^\times\backslash(G_v\times G_v)} f_1\big(X\cdot\mathrm{ad}(g_1)\big)f_2\big(X\cdot\mathrm{ad}(g_2)\big)\times\\
         &&\quad\times
         \sideset{}{_{V_v}'}\int \Phi_1(v^*g_1)\Phi_2(v^*g_2) \mathrm{d}v^*\times\\
         &&\qquad\times
         \big|\det(g_1)\big|_v^{1-s}\big|\det(g_2)\big|_v^s\big|\Delta(X)\big|_v \mathrm{d}g_1\mathrm{d}g_2
     \end{eqnarray*}
     where the primed integral is over all $v^*\in V_v$ such that $(X,v^*)\in(\mathfrak{g}\times V')_{\mathrm{reg},v}$, converges absolutely if $0<\mathrm{Re}(s)<1$ and continuous meromorphically over the entire complex plane.
\end{proposition}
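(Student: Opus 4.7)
The plan is to unfold both $K_v(g;f_1,f_2)$ and $E(g,s;\Phi_1,\Phi_2)$ and then apply the Weyl integration formula. I would begin by writing
\begin{eqnarray*}
    \int_{Z_v\backslash G_v} K_v(g;f_1,f_2) E(g,s;\Phi_1,\Phi_2)\mathrm{d}g
    &=& \int_{Z_v\backslash G_v}\!\!\mathrm{d}g \int_{\mathfrak{g}_v}\!\!\mathrm{d}X\, f_1(X)f_2(X\cdot\mathrm{ad}(g))|\det(g)|_v^s\times\\
    &&\quad\times \int_{F_v^\times}\!\!\mathrm{d}^\times z \int_{V_v}\!\!\mathrm{d}v^*\, \Phi_1(v^*)\Phi_2(zv^*g)|z|_v^{ns}.
\end{eqnarray*}
Since $Z_v$ acts trivially on $X\cdot\mathrm{ad}(g)$ while scaling $v^*$, and $|z|_v^{ns}|\det(g)|_v^s=|\det(zg)|_v^s$, the $\mathrm{d}^\times z$-integration unfolds the $Z_v$-quotient and converts the $g$-integral into one over $G_v$ with weight $|\det(g)|_v^s$.

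Next I would apply the Weyl integration formula to the $\mathfrak{g}_v$-integration, which produces the sum $\sum_{\lambda\vdash n}\sum_{E_v^\times\subset M_{\lambda,v}}\frac{1}{|W(M_{\lambda,v},E_v^\times)|}\int_{E_{\mathrm{reg},v}}|\Delta(X)|_v\mathrm{d}X$ and an inner orbital-type integral over $E_v^\times\backslash G_v$. The substitution $g_2=g_1g$ decouples the arguments of $f_1$ and $f_2$, after which the change of variables $w^*=v^*g_1^{-1}$ absorbs the Jacobians: the factor $|\det(g_1^{-1}g_2)|_v^s$ inherited from the Eisenstein weight combines with $|\det(g_1)|_v$ from the $\mathrm{d}v^*$ transformation to yield the total weight $|\det(g_1)|_v^{1-s}|\det(g_2)|_v^s$, matching the definition of $I_v(X,s;f_1,f_2,\Phi_1,\Phi_2)$. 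The restriction of $v^*$ to the regular locus $(\mathfrak{g}\times V')_{\mathrm{reg},v}$ is automatic, since for any $X\in E_{\mathrm{reg},v}$ the set of non-regular pairs $(X,v^*)$ is cut out by the vanishing of the determinant of the Krylov matrix $[v^*,v^*X,\dots,v^*X^{n-1}]$ and hence has measure zero.

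For absolute convergence in the strip $0<\mathrm{Re}(s)<1$, the strategy is to use an Iwasawa-type decomposition adapted to $E_v^\times$ to separate the $g_1$- and $g_2$-integrations into building blocks resembling local Tate integrals coupled only through the compact $K$-variable. The $g_1$-factor, weighted by $|\det(g_1)|_v^{1-s}$, converges for $\mathrm{Re}(1-s)>0$, while the $g_2$-factor, weighted by $|\det(g_2)|_v^s$, converges for $\mathrm{Re}(s)>0$; intersecting these constraints gives the strip $0<\mathrm{Re}(s)<1$. Meromorphic continuation to all of $\mathbb{C}$ then follows by expressing each building block as a local Tate integral, which continues meromorphically by Tate's thesis.

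I expect the main obstacle to lie in cleanly separating the two copies of $G_v$, since the integrand $\Phi_1(v^*g_1)\Phi_2(v^*g_2)$ couples them through the common $v^*$. Handling the strata with $\lambda\neq[n]$ — where $E_v^\times$ is non-elliptic and the orbital integral involves additional directions — will require an inductive argument paralleling Proposition \ref{gl(n) regular orbital integral proposition} and Lemma \ref{gl(n) regular orbital integral lemma}, exploiting the companion-matrix normal form of regular semisimple orbits to reduce to lower-rank cases, together with Fubini justifications throughout the unfolding.
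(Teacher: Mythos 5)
Your unfolding of $K_v$ and $E(\cdot,s;\Phi_1,\Phi_2)$, the absorption of the $Z_v$-quotient by the $z$-integration, the Weyl integration formula, the substitution $g_2=g_1g$, and the bookkeeping producing the exponents $|\det(g_1)|_v^{1-s}|\det(g_2)|_v^s$ all agree with the paper. But the step you defer as ``the main obstacle'' --- decoupling the two copies of $G_v$ that are tied together through the common $v^*$ --- is exactly the content of the proposition, and your proposal does not supply the idea that resolves it. The paper's key observation is that for $X\in E_{\mathrm{reg},v}$ the set of $v^*$ with $(X,v^*)\in(\mathfrak{g}\times V')_{\mathrm{reg},v}$ is a single Zariski open orbit $e_{E_v}^*E_v^\times$ under the right action of the centralizer $E_v^\times=G_{X,v}$. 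Parametrizing the $v^*$-integral by this orbit (the Jacobian between the additive measure $\mathrm{d}v^*$ and the multiplicative Haar measure on $E_v^\times$ being a power of $|\det(x)|_v$) and folding the resulting $E_v^\times$-integration into the quotient $E_v^\times\backslash(G_v\times G_v)$ yields the factorization
\begin{eqnarray*}
    \int_{E_v^\times}\int_{V_v} \Phi_1(v^*x^{-1}g_1)\Phi_2(v^*xg_2)\,\mathrm{d}v^*\,\mathrm{d}x
    &=&
    I_v(g_1,1-s;\Phi_1)\,I_v(g_2,s;\Phi_2)
\end{eqnarray*}
(the $v^*$-integral being restricted to the regular locus), where each factor is a product of local Tate integrals over the fields $E_{i,v}$ constituting $E_v^\times\simeq\prod_iE_{i,v}^\times$, and is locally constant and compactly supported as a function of its own variable on $E_v^\times\backslash G_v$. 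This single identity simultaneously decouples $g_1$ from $g_2$, proves absolute convergence on $0<\mathrm{Re}(s)<1$ (one Tate factor requires $\mathrm{Re}(1-s)>0$, the other $\mathrm{Re}(s)>0$, and the remaining integrations are over compact sets), and gives the meromorphic continuation by Tate's local theory. Without it, your convergence argument is circular: one cannot speak of ``the $g_1$-factor'' and ``the $g_2$-factor'' converging separately before the integrand has actually been factored.

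A secondary point: your plan to handle the strata $\lambda\neq[n]$ by an induction modeled on Proposition \ref{gl(n) regular orbital integral proposition} and Lemma \ref{gl(n) regular orbital integral lemma} is misdirected. Those arguments concern regular but non-semisimple global orbits, whereas here the Weyl integration formula only ever produces regular \emph{semisimple} $X$; the non-elliptic strata require no separate induction, since the open-orbit factorization above applies uniformly, the Tate integral simply splitting as a product over the factors $E_{i,v}$ of the (possibly non-elliptic) torus.
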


\begin{proof}
    Restricting to the subset $(\mathfrak{g}\times V')_{\mathrm{reg},v}\subset\mathfrak{g}_v\times V_v$ of full measure, choosing $e_{E_v}^*\in V_v$ for each $E_v^\times$ such that the orbit $e_{E_v}^*E_v^\times\subset V_v$ is Zariski open, and applying the Weyl integration formula, then the integral (\ref{gl(n) local integral}) is equal to
    \begin{eqnarray*}
         &=&
         \int_{G_v} \int_{\mathfrak{g}_v}f_1(X)f_2\big(X\cdot\mathrm{ad}(g)\big)\mathrm{d}X \int_{V_v}\Phi_1(v^*)\Phi_2(v^*g)\mathrm{d}v^* \big|\det(g)\big|_v^s\mathrm{d}g\\
         &=&
         \sum_{\lambda\vdash n} \sum_{E_v^\times\subset M_{\lambda,v}} \frac{1}{|W(M_{\lambda,v},E_v^\times)|} \int_{G_v} \int_{E_v^\times\backslash G_v} \Big( \int_{E_{\mathrm{reg},v}} \\
         &&\quad\times f_1\big(X\cdot\mathrm{ad}(g_1)\big) f_2\big(X\cdot\mathrm{ad}(g_1g)\big) \sideset{}{_{V_v}'}\int \Phi_1(v^*g_1) \Phi_2(v^*g_1g)\mathrm{d}v^* \times\\
         &&\qquad\times \big|\Delta(X)\big|_v\mathrm{d}X\Big) \mathrm{d}g_1 \big|\det(g)\big|_v^s\mathrm{d}g\\
         &=&
         \sum_{\lambda\vdash n} \sum_{E_v^\times\subset M_{\lambda,v}} \frac{1}{|W(M_{\lambda,v},E_v^\times)|} \int_{G_v} \int_{G_v} \\
         &&\quad\times \Big( \int_{E_{\mathrm{reg},v}} f_1\big(X\cdot\mathrm{ad}(g_1)\big) f_2\big(X\cdot\mathrm{ad}(g_1g)\big) \big|\Delta(X)\big|_v\mathrm{d}X \times\\
         &&\qquad\times \Phi_1(e_{E_v}^*g_1) \Phi_2(e_{E_v}^*g_1g) \Big) \big|\det(g_1)\big|_v\mathrm{d}g_1 \big|\det(g)\big|_v^s\mathrm{d}g\\
         &=&
         \sum_{\lambda\vdash n} \sum_{E_v^\times\subset M_{\lambda,v}} \frac{1}{|W(M_{\lambda,v},E_v^\times)|} \int_{G_v} \int_{G_v} \\
         &&\quad\times \Big( \int_{E_{\mathrm{reg},v}} f_1\big(X\cdot\mathrm{ad}(g_1)\big) f_2\big(X\cdot\mathrm{ad}(g_2)\big) \big|\Delta(X)\big|_v\mathrm{d}X \times\\
         &&\qquad\times \Phi_1(e_{E_v}^*g_1) \Phi_2(e_{E_v}^*g_2) \Big) \big|\det(g_1)\big|_v^{1-s} \big|\det(g_2)\big|_v^s \mathrm{d}g_1\mathrm{d}g_2.
     \end{eqnarray*}
     If $\lambda=[n_r,\dots,n_1]$ and $E_v^\times$ elliptic in $M_{\lambda,v}$, then 
     \begin{eqnarray*}
         E_v^\times\simeq 
         \left[\begin{matrix}
         E_{r,v}^\times&\cdots&0\\
         \vdots&\ddots&\vdots\\
         0&\cdots&E_{1,v}^\times
         \end{matrix}\right]
         &\subset&
         \left[\begin{matrix}
         \mathrm{GL}_{n_r,v}&\cdots&0\\
         \vdots&\ddots&\vdots\\
         0&\cdots&\mathrm{GL}_{n_1,v}
         \end{matrix}\right]
         \simeq M_{\lambda,v}
     \end{eqnarray*}
     where $E_{i,v}/F_v$ is a field extension of degree $n_i$ for $i=1,\dots,r$, and
     \begin{eqnarray*}
         I_v(g,s;\Phi)
         &=&
         \big|\det(g)\big|_v^s \int_{E_{r,v}^\times}\!\!\!...\!\!\!\int_{E_{1,v}^\times} \Phi\big( [e_{E_r}^*x_r,\dots,e_{E_1}^*x_1] g\big) \prod_{i=1}^r\big|\det(x_i)\big|_v^s\mathrm{d}x_i
     \end{eqnarray*}
     is a family of products of local Tate integrals which is locally constant and compactly supported in $g\in E_v^\times\backslash G_v$. Hence the integral (\ref{gl(n) local integral}) is equal to
     \begin{eqnarray}
         \label{gl(n) local tate integral}
         &=&
         \sum_{\lambda\vdash n} \sum_{E_v^\times\subset M_{\lambda,v}} \frac{1}{|W(M_{\lambda,v},E_v^\times)|} \int_{E_{\mathrm{reg},v}} \Big(\int_{E_v^\times\backslash G_v} \int_{E_v^\times\backslash G_v} \\
         &&\quad\times  f_1\big(X\cdot\mathrm{ad}(g_1)\big) f_2\big(X\cdot\mathrm{ad}(g_2)\big) I_v(g_1,1-s,\Phi_1) I_v(g_2,s,\Phi_2) \mathrm{d}g_1\mathrm{d}g_2\Big) \big|\Delta(X)\big|_v\mathrm{d}X\nonumber
     \end{eqnarray}
     which converges absolutely on the strip $0<\mathrm{Re}(s)<1$ and continuous meromorphically over the entire complex plane, where
     \begin{eqnarray*}
         I_v(g_1,1-s,\Phi_1) I_v(g_2,s,\Phi_2)
         &=&
         \int_{E_v^\times}\sideset{}{_{V_v}'}\int \Phi_1(v^*x^{-1}g_1)\Phi_2(v^*xg_2) \mathrm{d}v^*\mathrm{d}x
     \end{eqnarray*}
     and the integrand in (\ref{gl(n) local tate integral}) is equal to $I_v(X,s;f_1,f_2,\Phi_1,\Phi_2)$.
\end{proof}

\begin{corollary}
     If $E_v^\times$ is an elliptic torus in $M_{\lambda,v}$ where $\lambda=[n_r,\dots,n_1]$ and $X\in E_{\mathrm{reg},v}$, then $I_v(X,s;f_1,f_2,\Phi_1,\Phi_2)$ is the product of
     \begin{eqnarray*}
         \zeta_{E_v}(q_v^{-s})
         &=&
         \prod_{i=1}^r \frac{1}{(1-q_v^{-n_i+n_is})(1-q_v^{-n_is})}
     \end{eqnarray*}
     by an entire Laurent polynomial of the form $p(q_v^{-s})\in\mathbb{C}[q_v^{-s},q_v^s]$.
\end{corollary}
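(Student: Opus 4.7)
The plan is to exploit the product factorization established in the proof of the preceding proposition, from which
\begin{eqnarray*}
    I_v(X,s;f_1,f_2,\Phi_1,\Phi_2)
    &=&
    |\Delta(X)|_v\cdot J_v(1-s;f_1,\Phi_1)\cdot J_v(s;f_2,\Phi_2),\\
    J_v(t;f,\Phi)
    &=&
    \int_{E_v^\times\backslash G_v} f\big(X\cdot\mathrm{ad}(g)\big) I_v(g,t,\Phi) \mathrm{d}g,
\end{eqnarray*}
where $I_v(g,t,\Phi)$ is the iterated local Tate integral exhibited in the proof of the proposition. It will suffice to show that each $J_v(t;f,\Phi)$ is the product of $\prod_{i=1}^r\zeta_{E_{i,v}}(t)=\prod_{i=1}^r(1-q_v^{-n_it})^{-1}$ with a Laurent polynomial in $q_v^{\pm t}$; combining the two factors then produces the asserted factorization through $\zeta_{E_v}(q_v^{-s})=\prod_i\zeta_{E_{i,v}}(s)\zeta_{E_{i,v}}(1-s)$.

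Since $X\in E_{\mathrm{reg},v}$ is elliptic regular in $M_{\lambda,v}$, its centralizer in $G_v$ is precisely $E_v^\times$ and its orbit $X\cdot\mathrm{ad}(G_v)\subset\mathfrak{g}_v$ is closed. Combined with the compact support of $f\in\mathcal{S}(\mathfrak{g}_v)$, this forces $g\mapsto f(X\cdot\mathrm{ad}(g))$ to have compact support on $E_v^\times\backslash G_v$. Expanding $I_v(g,t,\Phi)$ and swapping integrals by Fubini (valid in the range of absolute convergence guaranteed by the proposition) I would rewrite
\begin{eqnarray*}
    J_v(t;f,\Phi)
    &=&
    \int_{E_{r,v}^\times}\!\!\!\dots\!\!\!\int_{E_{1,v}^\times} \Psi(x_1,\dots,x_r;t) \prod_{i=1}^r \big|\det(x_i)\big|_v^t \mathrm{d}x_i,\\
    \Psi(x_1,\dots,x_r;t)
    &=&
    \int_{E_v^\times\backslash G_v} f\big(X\cdot\mathrm{ad}(g)\big) \Phi\big([e_{E_r}^*x_r,\dots,e_{E_1}^*x_1]g\big) \big|\det(g)\big|_v^t \mathrm{d}g.
\end{eqnarray*}

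Because the $g$-integration runs over a compact set on which $|\det(g)|_v$ takes only finitely many values, $\Psi(\cdot;t)$ is a Schwartz function on $\prod_iE_{i,v}$ for each $t$, and its $t$-dependence is through a fixed Laurent polynomial in $q_v^{\pm t}$ with compact-support coefficients. In the non-archimedean setting every such Schwartz function is a finite linear combination of tensor products of characteristic functions of open compact subsets, so Tate's local theorem applies independently in each coordinate $E_{i,v}^\times$ to produce
\begin{eqnarray*}
    J_v(t;f,\Phi)
    &=&
    \prod_{i=1}^r\zeta_{E_{i,v}}(t)\cdot p(q_v^{-t})
\end{eqnarray*}
for some Laurent polynomial $p\in\mathbb{C}[q_v^{-t},q_v^t]$. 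Multiplying $J_v(1-s;f_1,\Phi_1)\cdot J_v(s;f_2,\Phi_2)$ then yields exactly the claim. The main obstacle is the ellipticity bookkeeping: without closedness of the orbit of $X$ and the resulting compactness of the support of $g\mapsto f(X\cdot\mathrm{ad}(g))$, $\Psi(\cdot;t)$ would cease to be Schwartz and Fubini would fail, so the elliptic hypothesis on $X$ is used essentially throughout.
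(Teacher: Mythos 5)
Your argument is correct and is in substance the paper's own proof: both rest on the compact support and local constancy of $g\mapsto f_i\big(X\cdot\mathrm{ad}(g)\big)$ on $E_v^\times\backslash G_v$ (from $\Delta(X)\neq0$) together with Tate's local theory placing the inner Tate integrals in the fractional ideal $\zeta_{E_v}(q_v^{-s})\cdot\mathbb{C}[q_v^{-s},q_v^s]$, so that the outer integration reduces to a finite sum. The paper packages this by viewing the integrand as a compactly supported locally constant function valued in that fractional ideal, whereas you pull the Tate integrals outside by Fubini and apply Tate's theorem coordinatewise to the Schwartz function $\Psi$; these are the same computation.
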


\begin{proof}
    By the analytic properties of local Tate integrals,
    \begin{eqnarray*}
        I_v(X,s;f_1,f_2,\Phi_1,\Phi_2)
        &=&
        \int_{E_v^\times\backslash G_v} \int_{E_v^\times\backslash G_v} f_1\big(X\cdot\mathrm{ad}(g_1)\big) f_2\big(X\cdot\mathrm{ad}(g_2)\big) \big|\Delta(X)\big|_v\times\\
        &&\qquad\times
        I_v(g_1,1-s,\Phi_1) I_v(g_2,s,\Phi_2) \mathrm{d}g_1\mathrm{d}g_2
    \end{eqnarray*}
    could be interpreted as the integral of a vector-valued function in
    \begin{eqnarray*}
        C_c^\infty\Big(E_v^\times\backslash G_v\times E_v^\times\backslash G_v~,~\zeta_{E_v}(q_v^{-s})\cdot\mathbb{C}[q^{-s},q^s]\Big)
    \end{eqnarray*}
    where $\zeta_{E_v}(q_v^{-s})\cdot\mathbb{C}[q_v^{-s},q_v^s]\subset\mathbb{C}(q_v^{-s})$ denotes the $\mathbb{C}[q_v^{-s},q_v^s]$-fractional ideal generated by $\zeta_{E_v}(q_v^{-s})$. Since the integral only has finitely many summands, the corollary follows.
\end{proof}

\begin{theorem}[Local mirabolic trace formula]
    Let $\Phi_1,\Phi_2\in\mathcal{S}(V_v)$, then the distribution $I_v(s)\in\mathcal{S}'(\mathfrak{g}_v\times\mathfrak{g}_v)$ defined by
    \begin{eqnarray*}
        I_v(s;f_1,f_2)
        &=&
        \sum_{\lambda\vdash n} \sum_{E_v^\times\subset M_{\lambda,v}} \frac{1}{|W(M_{\lambda,v},E_v^\times)|} \int_{E_{\mathrm{reg},v}} I_v(X,s;f_1,f_2,\Phi_1,\Phi_2) \mathrm{d}X
    \end{eqnarray*}
    where the summation $\sum_{E_v^\times}$ ranges over all conjugacy classes of maximal tori $E_v^\times\subset M_{\lambda,v}$ elliptic modulo the center of $M_{\lambda,v}$
    \begin{itemize}
        \item converges absolutely if $0<\mathrm{Re}(s)<1$ and continues meromorphically to a sum of entire multiples of products of local zeta functions, and
        \item satisfies the identity
        \begin{eqnarray*}
            I_v(s;\widehat{f_1},f_2)
            &=&
            I_v(s;f_1,\widehat{f_2}).
        \end{eqnarray*}
    \end{itemize}
\end{theorem}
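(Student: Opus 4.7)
The plan is to deduce both assertions from the preceding Proposition and Corollary, combined with the Plancherel-type identity $K_v(g;\widehat{f_1},f_2)=K_v(g;f_1,\widehat{f_2})$ recorded earlier in the discussion of Waldspurger's local trace formula.

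For the first bullet, I would first observe that the outer double sum $\sum_\lambda\sum_{E_v^\times}$ is finite: there are finitely many partitions of $n$, and for each such $\lambda$ the Levi $M_{\lambda,v}$ carries only finitely many conjugacy classes of maximal tori elliptic modulo its center, since there are only finitely many field extensions of bounded degree of the non-archimedean local field $F_v$. Absolute convergence on the strip $0<\mathrm{Re}(s)<1$ is therefore inherited term by term from the preceding Proposition. For the meromorphic continuation I would appeal to the preceding Corollary, which identifies each integrand $I_v(X,s;f_1,f_2,\Phi_1,\Phi_2)$ as an element of the fractional ideal $\zeta_{E_v}(q_v^{-s})\cdot\mathbb{C}[q_v^{-s},q_v^s]$ whose coefficients are locally constant and compactly supported in $X\in E_{\mathrm{reg},v}$. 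Integration over $E_{\mathrm{reg},v}$ then produces an entire multiple of $\zeta_{E_v}(q_v^{-s})$, and summing over the finitely many pairs $(\lambda,E_v^\times)$ yields the required structural expression as a sum of entire multiples of products of local zeta functions.

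For the Fourier invariance, I would exploit the integral representation supplied by the Proposition: on the strip $0<\mathrm{Re}(s)<1$ one has
\begin{eqnarray*}
    I_v(s;f_1,f_2)
    &=&
    \int_{Z_v\backslash G_v} K_v(g;f_1,f_2)E_v(g,s;\Phi_1,\Phi_2)\mathrm{d}g
\end{eqnarray*}
as an absolutely convergent integral. The Plancherel identity gives $K_v(g;\widehat{f_1},f_2)=K_v(g;f_1,\widehat{f_2})$ for every $g\in G_v$, so substituting either $(\widehat{f_1},f_2)$ or $(f_1,\widehat{f_2})$ into the right-hand side produces the same value. This proves $I_v(s;\widehat{f_1},f_2)=I_v(s;f_1,\widehat{f_2})$ on the strip, and the identity then extends to all $s\in\mathbb{C}$ by meromorphic continuation of both sides.

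The hardest part of the argument has essentially been dispatched already, namely the explicit identification of $I_v(X,s;f_1,f_2,\Phi_1,\Phi_2)$ as a member of the zeta fractional ideal in the Corollary. What remains here is the mild bookkeeping of checking that the $X$-integration preserves this fractional-ideal structure; since the integrand is locally constant and compactly supported in $X$, the integral is a finite $\mathbb{C}[q_v^{-s},q_v^s]$-linear combination of values in $\zeta_{E_v}(q_v^{-s})\cdot\mathbb{C}[q_v^{-s},q_v^s]$, giving the desired structural form with no spurious poles.
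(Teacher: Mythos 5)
Your proposal is correct and follows essentially the same route as the paper: the convergence and meromorphic continuation are read off from the preceding Proposition and Corollary, and the Fourier invariance follows from the integral representation $I_v(s;f_1,f_2)=\int_{Z_v\backslash G_v}K_v(g;f_1,f_2)E_v(g,s;\Phi_1,\Phi_2)\mathrm{d}g$ together with the Plancherel identity $K_v(g;\widehat{f_1},f_2)=K_v(g;f_1,\widehat{f_2})$. The extra bookkeeping you supply (finiteness of the sum over $(\lambda,E_v^\times)$ and preservation of the fractional-ideal structure under the $X$-integration) is a harmless elaboration of what the paper leaves implicit.
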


\begin{proof}
    It remains to establish the identity under the Fourier transforms, which follows from the alternative definition
    \begin{eqnarray*}
        I_v(s;f_1,f_2)
        &=&
        \int_{Z_v\backslash G_v} K_v(g;f_1,f_2) E(g,s;\Phi_1,\Phi_2) \mathrm{d}g
    \end{eqnarray*}
    together with the identity $K_v(g;\widehat{f_1},f_2)=K_v(g;f_1,\widehat{f_2})$.
\end{proof}

\end{document}